\numberwithin{equation}{section}
\theoremstyle{plain}
\newtheorem{theorem}{Theorem}[section]
\newtheorem{lemma}{Lemma}[section]
\newtheorem{corollary}{Corollary}[section]
\theoremstyle{definition}
\newtheorem{definition}{Definition}[section]
\newtheorem{example}{Example}[section]
\newtheorem{remark}{Remark}[section]
\def\now{%
\minute=\time%
\hour=\time \divide \hour by 60%
\hourMins=\hour \multiply\hourMins by 60%
\advance\minute by -\hourMins%
\zeroPadTwo{\the\hour}:\zeroPadTwo{\the\minute}%
}
\def\zeroPadTwo#1{\ifnum #1<10 0\fi#1}
\def\^#1{\ifmmode {\mathaccent"705E #1} \else {\accent94 #1} \fi}
\def\~#1{\ifmmode {\mathaccent"707E #1} \else {\accent"7E #1} \fi}
\def\*#1{#1^\ast}
\edef\-#1{\noexpand\ifmmode {\noexpand\bar{#1}} \noexpand\else \-#1\noexpand\fi}
\def\>#1{\vec{#1}}
\def\.#1{\dot{#1}}
\def\atop{\@@atop}
\def\*#1{\mathscr{#1}}
\renewcommand{\leq}{\leqslant}
\renewcommand{\geq}{\geqslant}
\newcommand{\eq}{\eqref}
\newcommand{\Var}{\mathop{\mathrm{Var}}\nolimits}
\newcommand{\Cov}{\mathop{\mathrm{Cov}}}
\def\ben#1{\begin{equation}#1\end{equation}}
\def\bm#1{\begin{multline*}#1\end{multline*}}
\def\beqn#1\eeqn{\begin{align}#1\end{align}}
\def\beq#1\eeq{\begin{align*}#1\end{align*}}
\renewcommand\section{\@startsection {section}{1}{\z@}%
{-3.5ex \@plus -1ex \@minus -.2ex}%
{1.3ex \@plus.2ex}%
{\center\small\sc\mathversion{bold}\MakeUppercase}}
\def\subsection#1{\@startsection {subsection}{2}{0pt}%
{-3.5ex \@plus -1ex \@minus -.2ex}%
{1ex \@plus.2ex}%
{\bf\mathversion{bold}}{#1}}
\def\subsubsection#1{\@startsection{subsubsection}{3}{0pt}%
{\medskipamount}%
{-10pt}%
{\normalsize\itshape}{\kern-2.2ex. #1.}}
\def\blfootnote{\xdef\@thefnmark{}\@footnotetext}
\title{\textbf{Normal Approximation and Fourth Moment Theorems for Monochromatic Triangles}}
\author{Bhaswar B. Bhattacharya\thanks{Department of Statistics, University of Pennsylvania, USA, \texttt{bhaswar@wharton.upenn.edu}} \and Xiao Fang\thanks{Department of Statistics, The Chinese University of Hong Kong, Hong Kong, \texttt{xfang@sta.cuhk.edu.hk}} \and Han Yan\thanks{Department of Statistics, The Chinese University of Hong Kong, Hong Kong, \texttt{1155105775@link.cuhk.edu.hk}}}
\date{}
\begin{document}

\maketitle

\small 
\noindent\textbf{Abstract}:  Given a graph sequence $\{G_n\}_{n \geq 1}$ denote by $T_3(G_n)$ the number of monochromatic triangles in a uniformly random coloring of the vertices of $G_n$ with $c \geq 2$ colors. This arises as a generalization of the birthday paradox, where $G_n$ corresponds to a friendship network  and $T_3(G_n)$ counts the number of triples of friends with matching birthdays. In this paper we prove a central limit theorem (CLT) for $T_3(G_n)$ with explicit error rates. The proof involves constructing a martingale difference sequence by carefully ordering the vertices of $G_n$, based on a certain combinatorial score function, and using a quantitive version of the martingale CLT. We then relate this error term to the well-known fourth moment phenomenon, which, interestingly, holds only when the number of colors $c \geq 5$. We also show that the convergence of the fourth moment is necessary to obtain a Gaussian limit for any $c \geq 2$, which, together with the above result, implies that the fourth-moment condition characterizes the limiting normal distribution of $T_3(G_n)$, whenever $c \geq 5$. Finally, to illustrate the promise of our approach, we include an alternative proof of the CLT for the number of monochromatic edges, which provides quantitative rates for the results obtained in \cite{BDM}. \\

\noindent{\it Keywords}:  Graph coloring, martingale central limit theorem, rates of convergence, fourth moment theorem. \\

\normalsize

\section{Introduction and Main Results}\label{sec1}

Let $G_n=(V(G_n), E(G_n))$ be a deterministic sequence of simple graphs with vertex set $V(G_n) = \{1, 2, \dots, |V(G_n)|\}$ and edge set $E(G_n)$. Denote by 
$A(G_n)=(a_{ij}(G_n))_{1\leq i,j\leq |V(G_n)| }$,  the adjacency matrix of $G_n$, namely, $a_{ij}(G_n)=1$ if $(i, j)$ is an edge in $G_n$ and $a_{ij}(G_n)=0$ otherwise. In a {\it uniformly random $c$-coloring of $G_n$}, the vertices of $G_n$ are colored with $c \geq 2$ colors as follows:  
\begin{equation}\P(v\in V(G_n) \text{ is colored with color } a\in \{1, 2, \ldots, c\})=\frac{1}{c},
\label{eq:uniform}
\end{equation}
independent from the other vertices. An edge $(a, b) \in E(G_n)$ is said to be {\it monochromatic} if $X_a=X_b$, where $X_v$ denotes the color of the vertex $v \in V(G_n)$ in a uniformly random $c$-coloring of $G_n$. Denote by  
\begin{align}\label{eq:me}
T_2(G_n)=\sum_{1\le s_1 < s_2 \leq |V(G_n)|} a_{s_1 s_2}(G_n) \bm 1\{X_{s_1}=X_{s_2}\},
\end{align} 
the number of monochromatic edges in $G_n$. 

The statistic $T_2(G_n)$ arises in various contexts, for example, as the Hamiltonian of the Ising/Potts model on $G_n$ \cite{bmpotts}, in non-parametric two-sample tests \cite{fr}, and as a generalization of the birthday paradox \cite{arriatia_garibaldi_kilian,BaHoJa92,CeFo06,ChDiMe05,dasguptasurvey,diaconisholmes,diaconismosteller}: If $G_n$ is a friendship-network graph colored uniformly with $c=365$ colors (corresponding to birthdays and  assuming the birthdays are uniformly distributed across the year), then two friends will have the same birthday whenever the corresponding edge in the graph $G_n$ is monochromatic. (When the underlying graph $G_n=K_n$ is the complete graph $K_n$ on $n$ vertices, this reduces to the classical birthday problem.) In particular, $T_2(G_n)$ counts the number of pairs of friends with matching birthdays. The birthday problem arise naturally many applications, for example, in the study of coincidences \cite{diaconismosteller}, testing discrete distributions \cite{batu,diakonikolas_collision}, and the discrete logarithm problem \cite{BBBDL,ghdl,pollarddlp}, all of which require  understanding the asymptotic properties of $T_2(G_n)$ for various graph sequences $G_n$. 

The limiting distribution of $T_2(G_n)$ exhibit various universality phenomena, depending on how the number of colors $c$ scales with the number of edges $|E(G_n)|$. In particular, Bhattacharya et al. \cite{BDM} showed that  for any $c \geq 2$ fixed, the asymptotic normality of the standardized version of $T_2(G_n)$ exhibits a {\it fourth-moment phenomenon}. More precisely, for any $c \geq 2$ and a sequence of graphs with $|E(G_n)| \rightarrow \infty$,
\begin{align}\label{eq:Z2Gn}
Z_2(G_n)=\frac{T_2(G_n)- \E(T_2(G_n))}{\sqrt{\Var(T_2(G_n))}} \stackrel{D} \rightarrow N(0, 1),
\end{align}
whenever $\E(Z_2(G_n)^4) \rightarrow 3$ (cf. \cite[Theorem 1.3]{BDM} and Section \ref{sec:Z2} below for further details).  This is an example of the celebrated fourth-moment phenomenon, which was originally discovered in the seminal papers \cite{NoPe09,NuPe05} and has, since then, emerged as a unifying principle in various problems,  
asserting that a central limit theorem (CLT) for non-linear functionals of random fields is often implied by the convergence of the corresponding sequence of fourth moments.  

A natural generalization of the birthday problem is to consider birthday matches between 3 or more friends \cite{extension}. This can be formulated as a graph coloring problem, where instead of counting monochromatic edges, one counts the number of monochromatic $r$-cliques (the complete graph on $r$-vertices) in a uniformly  random $c$-coloring of a friendship network $G_n$, for some $r \geq 3$. Hereafter, we denote by $T_r(G_n)$ the number of monochromatic $r$-cliques in a uniformly random $c$-coloring of $G_n$. In addition to its natural application in understanding coincidences \cite[Problem 3]{diaconismosteller}, this and related statistics arise in various problems in 
occupancy urns and cryptology (cf. \cite{BBBDL,ns} and the references therein).  Given the results for $T_2(G_n)$, it is natural to conjecture a similar fourth-moment phenomenon for the asymptotic normality of $T_r(G_n)$. However, the combinatorial techniques developed in \cite{BDM} for analyzing $T_2(G_n)$ does not generalize to monochromatic triangles or higher cliques, and standard Stein's method techniques do not appear to be robust enough for obtaining the precise conditions required, and, as a consequence,  proving such limit theorems have turned out to be surprisingly difficult. 

In this paper, we take the first step in this direction by considering the case $r=3$, which corresponds to counting monochromatic triangles. Our main result is a CLT for 
\begin{align}\label{eq:Z}
Z_3(G_n)=\frac{T_3(G_n)-\E(T_3(G_n))}{\sqrt{\Var(T_3(G_n))}}, 
\end{align} 
with an error bound for any $c \geq 2$ fixed (Theorem \ref{T1}). The proof uses the Hoeffding's decomposition to write $Z_3(G_n)$ as a martingale difference sequence, followed by an application of a quantitative version of the martingale CLT \cite{ HeBr70}. The highlight of the proof is the construction of the martingale difference sequence, which requires a careful ordering of the vertices of $G_n$, based on the counts of certain subgraphs passing through each of the  vertices, to control various crucial error terms. We then show how this error bound relates to the fourth moment condition. Here an interesting threshold behavior emerges: For any $c \geq 5$ fixed, the error bound obtained in Theorem \ref{T1} can be rewritten in terms of the fourth moment difference $\E(Z_3(G_n)^4) -3$, which shows, for $c \geq 5$, $Z_3(G_n) \stackrel{D} \rightarrow N(0, 1)$ whenever $\E(Z_3(G_n)^4) \rightarrow 3$ (Theorem \ref{T2}). However, this is not the case for $2 \leq c \leq 4$, where there are instances with $\E(Z_3(G_n)^4) \rightarrow 3$, but $Z_3(G_n)$ has a non-Gaussian limit. In order to obtain the critical value of $c$ beyond which the fourth-moment phenomenon holds,  one requires a delicate understanding of the fourth moment of $Z_3(G_n)$. While this involves a tedious calculation, it illustrates the intricacies of the problem as  one moves from monochromatic edges to triangles and beyond.  We also show that the fourth-moment condition is necessary to have a  normal limit for any $c \geq 2$ fixed (Theorem \ref{THM:4MOMENT}), which combined with the result above shows that the fourth-moment condition is necessary and sufficient for $Z_3(G_n)$ to have a Gaussian limit, whenever $c \geq 5$.  In fact, to the best of our knowledge, this is the first example of the fourth moment phenomenon that is not a degenerate $U$-statistic of a fixed order (see Remark \ref{remark:4moment} for further details). Finally, to illustrate the core idea behind of our approach, we apply this method to obtain quantitative bounds for the CLT of the number of monochromatic edges $Z_2(G_n)$ (Theorem \ref{T3}). As before, the proof requires a careful reordering of the vertices (in this case, based on the non-increasing order of the degrees), which strengthens, by providing explicit error rates, the results obtained in \cite{BDM}. The formal statements of the results are given below.

\subsection{Monochromatic Triangles: CLT and the Fourth Moment Phenomenon} 
\label{sec:Z3}

Recall that $T_3(G_n)$ denotes the number of monochromatic triangles in a uniformly random $c$-coloring of $G_n$. More formally, 
\begin{align}\label{eq:T3Gn}
T_3(G_n):= \sum_{1 \leq s_1 < s_2 < s_3 \leq |V(G_n)| } a_{s_1 s_2}(G_n) a_{s_2 s_3}(G_n) a_{s_1 s_3}(G_n) \bm 1\{ X_{s_1}= X_{s_2}= X_{s_3}\},
\end{align} 
where $X_v$ denotes the color of the vertex $v \in V(G_n)$ obtained from \eqref{eq:uniform}. For a fixed simple graph $H$, denote by $N(H, G_n)$ the number of copies (not necessarily induced) of the graph $H$ in $G_n$. Then it is easy to see that $\E(T_3(G_n))=\frac{N(K_3, G_n)}{c^2} $, where $K_3$ denotes a triangle. Hence, without loss of generality, hereafter, we assume $N(K_3, G_n) \geq 1$, for all $n \geq 1$. Moreover, a direct calculation shows that  
\begin{align}\label{eq:variance_triangle}
\Var(T_3(G_n))=\frac{1}{c^2} \left(1-\frac{1}{c^2}\right) N(K_3, G_n)+2\left(\frac{1}{c^3}-\frac{1}{c^4}\right) N(\triangle_2, G_n),
\end{align} 
where $\triangle_s$ denotes the graph formed by $s$-tuples of triangles sharing one common edge, for $s \geq 1$. We will refer to the graph $\triangle_s$ as the $s$-pyramid (see Figure \ref{fig2} in Appendix \ref{app2} for illustrations of $\triangle_1$, $\triangle_2$, $\triangle_3$, and $\triangle_4$). Note that $\triangle_1$ is isomorphic to the triangle $K_3$, hence, both these notations will be used interchangeably. 

Our main result is a CLT along with an error bound for the standardized version of $T_3(G_n)$ (recall the definition of $Z_3(G_n)$ from \eqref{eq:Z}). To this end, denote 
\begin{align*}
b(G_n):=\sum_{1\leq s_1< s_2< s_3< s_4 \leq |V(G_n)|} \left( d_{s_1 s_2}  d_{s_2 s_3}  d_{s_3 s_4}  d_{s_4 s_1} + d_{s_1 s_2}  d_{s_2 s_4}  d_{s_4 s_3}  d_{s_3 s_1} +d_{s_1 s_3}  d_{s_3 s_2}  d_{s_2 s_4}  d_{s_4 s_1}   \right),
\end{align*}
where $d_{s_1 s_2} := \sum_{s_3 \ne \{s_1, s_2\}} a_{s_1 s_2}(G_n) a_{s_2 s_3}(G_n) a_{s_3 s_1}(G_n)$ is the number of triangles in $G_n$ with $(s_1, s_2)$ as an edge. Also, denote by $\Phi$ the standard normal distribution function. 

\begin{theorem}\label{T1} For $Z_3(G_n)$ as defined in \eqref{eq:Z}, 
\begin{align}\label{12}
\sup_{x\in \mathbb{R}} |\P(Z_3(G_n)\leq x)-\Phi(x)| \leq & K \left[  R_1^{\frac{1}{4}}  +  R_2 \right]^{\frac{1}{5}},
\end{align}
where 
\begin{align}\label{eq:R12}
R_1:= \frac{1+N(\triangle_4, G_n)}{( N(K_3, G_n) + N(\triangle_2, G_n))^2}, \quad R_2:= \frac{b(G_n)}{(  N(K_3, G_n) + N(\triangle_2, G_n))^2}, 
\end{align} 
and $K := K(c) >0$ is a constant that only depends on the number of colors $c$. 
\end{theorem}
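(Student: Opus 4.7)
The plan is to realise $T_3(G_n) - \IE T_3(G_n)$ as a martingale by revealing the vertex colors one at a time in a carefully chosen order, and then to invoke a quantitative martingale CLT of Heyde--Brown type. Concretely, fix an ordering of the vertices of $G_n$, labelled $v_1, v_2, \ldots, v_{|V(G_n)|}$ in that order (the choice will be critical, see below), set $\mathcal{F}_k := \sigma(X_{v_1}, \ldots, X_{v_k})$, and define
\[
D_k := \IE[T_3(G_n) \mid \mathcal{F}_k] - \IE[T_3(G_n) \mid \mathcal{F}_{k-1}], \qquad 1 \leq k \leq |V(G_n)|.
\]
Then $(D_k)_k$ is a martingale difference sequence summing to $T_3(G_n) - \IE T_3(G_n)$, and the Heyde--Brown inequality bounds $\sup_x |\IP(Z_3(G_n) \leq x) - \Phi(x)|$ by a constant times an expression of the form $[L_4 + V_2]^{1/5}$, where $L_4 := \sigma^{-4}\sum_k \IE D_k^4$ is a Lyapunov-type fourth-moment term and $V_2 := \sigma^{-4}\,\IE\bigl(\sum_k \IE[D_k^2 \mid \mathcal{F}_{k-1}] - \sigma^2\bigr)^2$ is the normalised variance of the conditional second moments, with $\sigma^2 := \Var(T_3(G_n))$. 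The proof reduces to bounding these two quantities in terms of $R_1$ and $R_2$.

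Next, I would unpack each $D_k$ using Hoeffding's decomposition of the monochromatic triangle indicators. Since $\mathbf{1}\{X_{s_1}=X_{s_2}=X_{s_3}\}$ factors through pairwise color-match indicators, subtracting conditional expectations leaves contributions only from triangles $\{s_1, s_2, s_3\}$ whose ``last'' vertex (in the chosen ordering) equals $v_k$. Thus $D_k$ takes the explicit form of a sum, over such triangles, of a product of pairwise color-match indicators among $X_{v_1}, \ldots, X_{v_k}$, together with edge- and vertex-level Hoeffding correction terms localised near $v_k$. The ordering, defined through a combinatorial score on each vertex counting subgraphs (triangles and $\triangle_s$-configurations) passing through it, is chosen so that vertices incident to many triangles appear early, preventing pathological concentration of conditional variance in any single later increment.

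The bulk of the proof is then the combinatorial accounting of $L_4$ and $V_2$. For $L_4$, expanding $\IE D_k^4$ and using the mutual independence of the $X_v$'s produces a weighted sum over $4$-tuples of triangles meeting at $v_k$; summing over $k$ collapses this into a bounded (depending on $c$) multiple of $N(\triangle_4, G_n)$ plus lower-order contributions from degenerate overlaps. Normalising by $\sigma^4 \asymp (N(K_3, G_n) + N(\triangle_2, G_n))^2$ and combining with a Cauchy--Schwarz step produces the $R_1^{1/4}$ contribution. For $V_2$, expanding the variance of $\sum_k \IE[D_k^2 \mid \mathcal{F}_{k-1}]$ gives pairwise covariances of conditional second moments; each such covariance reduces, under the chosen ordering, to a weighted count of $4$-vertex configurations in which two anchoring edges---each carrying $d_{\cdot\cdot}$ triangles---form a cycle. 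Summing over all these configurations yields exactly the three cyclic terms in $b(G_n)$, and after normalisation by $\sigma^4$ gives the $R_2$ contribution.

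The main obstacle is the $V_2$ analysis: the conditional second moments $V_k := \IE[D_k^2 \mid \mathcal{F}_{k-1}]$ contain cross terms from pairs of triangles that share an edge or a vertex, and a naive expansion produces denser subgraph counts than those inside $b(G_n)$. The careful vertex ordering is precisely what forces these cross terms to collapse into the three cyclic $4$-cycle contributions of $b(G_n)$ and nothing heavier; verifying this requires a case analysis organised by the relative positions of the ``anchor'' vertices of the two triangles in the ordering. Once $L_4$ and $V_2$ are bounded as described, substituting them into the Heyde--Brown inequality yields precisely \eqref{12}.
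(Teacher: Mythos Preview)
Your overall architecture is right---Hoeffding decomposition, martingale differences $U_t$ indexed by the vertex revealed at step $t$, Heyde--Brown bound $[A+B]^{1/5}$ with $A=\sigma^{-4}\sum_t\IE U_t^4$ and $B=\sigma^{-4}\Var\bigl(\sum_t U_t^2\bigr)$, and a judicious vertex ordering. But you have misplaced the role of the ordering and the origin of the exponent $1/4$ in $R_1^{1/4}$, and this hides the one genuinely nontrivial combinatorial step.

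When you expand $A$ and $B$, you do not get a clean split ``$A\to R_1^{1/4}$, $B\to R_2$''. Both $A$ and $B$ contain the same three ingredients:
\[
A+B \ \lesssim_c\ \frac{N(\triangle_4,G_n)+s(G_n)+b(G_n)}{\sigma^4},
\qquad
s(G_n):=\sum_{s_1<s_2<s_3} d_{s_1s_3}^2\,d_{s_2s_3}^2 .
\]
The $b(G_n)$ contribution (your cyclic $4$-vertex configurations) and the $N(\triangle_4,G_n)$ contribution appear naturally in both $A$ and $B$ and require no special ordering. The dangerous term is $s(G_n)$, which arises already in $A$ from the ``two triangles sharing the last vertex'' cross terms $\IE(Y_{s_1s_3}^2Y_{s_2s_3}^2)$---these are not ``lower-order degenerate overlaps'' and there is no Cauchy--Schwarz step that turns them into $R_1^{1/4}$. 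What is needed is a separate combinatorial lemma: order the vertices so that $d_v:=N(\triangle_{1,v},G_n)+N(\triangle_{2,v},G_n)$ is nonincreasing (so triangles \emph{and} $2$-pyramids through $v$, not just triangles), and then interpolate between two counting bounds to obtain
\[
s(G_n)\ \lesssim\ \bigl(N(\triangle_1,G_n)+N(\triangle_2,G_n)\bigr)^{3/2}\bigl(1+N(\triangle_4,G_n)\bigr)^{1/4}.
\]
Dividing by $\sigma^4\asymp(N(\triangle_1,G_n)+N(\triangle_2,G_n))^2$ is exactly what produces $R_1^{1/4}$. So the ordering is not there to make $V_2$ collapse into $b(G_n)$; it is there to control $s(G_n)$, and that control is the source of the fractional exponent. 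Your proposal, as written, never isolates $s(G_n)$ and never states or proves this lemma, so the argument would stall at both $A$ and $B$.
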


Note that Theorem \ref{T1} shows that for any $c \geq 2$ fixed, $Z_3(G_n) \stackrel{D} \rightarrow  N(0, 1)$, whenever 
$$N(\triangle_4, G_n)=o((N(K_3, G_n)+N(\triangle_2, G_n))^2) \text{ and } b(G_n)=o((N(K_3, G_n)+N(\triangle_2, G_n))^2).$$ These conditions are, in fact, tight in the sense that, if $N(\triangle_4, G_n)$ or $b(G_n)$ is \emph{not} of smaller order than $(N(K_3, G_n)+N(\triangle_2, G_n))^2$, then the CLT \emph{may not} hold for $Z_3(G_n)$ (as shown in Examples \ref{example1} and \ref{example2}).  The proof of Theorem \ref{T1}, which is given in Section \ref{sec3}, proceeds by writing $Z_3(G_n)$ as a martingale difference sequence (using the Hoeffding's decomposition), and then applying the martingale CLT with error bounds from \cite{HeBr70} to $Z_3(G_n)$.
The resulting error bound involves the two terms: one involving $b(G_n)$ and the other involving 
\begin{align}\label{eq:sGn}
s(G_n)=\sum_{1 \leq s_1 < s_2 < s_3 \leq |V(G_n)|} d_{s_1 s_3}^2 d_{s_2 s_3}^2. 
\end{align}
One of the main  combinatorial ingredients of the proof is to show that, after a careful ordering of the vertices of the graph, $s(G_n)$ can be bounded in terms of the pyramids $N(\triangle_s, G_n)$, for $1 \leq s \leq 4$ (see Lemma \ref{l1}). In fact, both the quantities $s(G_n)$ and $b(G_n)$ can be interpreted as the counts of certain subgraphs in $G_n$ which arise in the fourth moment of $Z_3(G_n)$, as will be evident from the proof of Theorem \ref{T1}. 

Next, we discuss the connection of the above result with the fourth moment phenomenon.  Recall that the asymptotic normality of the number of monochromatic edges exhibits the fourth moment phenomenon, that is, $Z_2(G_n)$ converges to $N(0, 1)$ whenever $\E (Z_2(G_n)^4 ) \rightarrow 3$, for all $c \geq 2$ fixed (see Remark \ref{remark:Z2} below for a more detailed discussion of this result). Therefore, it is natural to wonder whether this phenomenon extends to monochromatic triangles. This is discussed in the following theorem.\footnote{For positive sequences $\{a_n\}_{n\geq 1}$ and $\{b_n\}_{n\geq 1}$, $a_n \lesssim b_n$ means $a_n \leq C_1 b_n$, and $a_n \gtrsim b_n$ means $a_n \geq C_2 b_n$, and $a_n \asymp b_n$ means $C_2 b_n \leq a_n \leq C_1 b_n$, for all $n$ large enough and positive constants $C_1, C_2$. Moreover, subscripts in the above notation,  for example $\lesssim_\square$ and $\gtrsim_\square$,  denote that the hidden constants may depend on the subscripted parameters. }  

\begin{theorem}\label{T2}
For $Z_3(G_n)$ as in \eq{eq:Z} the following hold: 

\begin{itemize}

\item[$(1)$] For any $c\geq 5$ fixed, 
\ben{\label{13}
\sup_{x\in \mathbb{R}} |\P(Z_3(G_n)\leq x)-\Phi(x)| \lesssim_c (\E (Z_3(G_n)^4)-3)^{\frac{1}{20}}. } 

\item[$(2)$] If $2 \leq c \leq 4$, there exists a sequence of graphs $\{G_n\}_{n \geq 1}$, with $N(K_3, G_n) \rightarrow \infty$, for which $\E (Z_3(G_n)^4) \rightarrow 3$, but $Z_3(G_n)$ does not converge in distribution to $N(0, 1)$. 

\end{itemize}  
\end{theorem}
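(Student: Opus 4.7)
The plan for Part (1) is to connect the error quantities $R_1$ and $R_2$ from Theorem \ref{T1} directly to the fourth-moment gap $\E(Z_3(G_n)^4) - 3$. I would first expand
\[
\E\bigl( (T_3(G_n) - \E T_3(G_n))^4 \bigr) = \sum_{(t_1,t_2,t_3,t_4)} \E\prod_{j=1}^4 \bigl(\mathbf 1\{t_j \text{ monochromatic}\} - c^{-2}\bigr),
\]
where the outer sum is over ordered quadruples of triangles in $G_n$, and then partition these quadruples by the isomorphism class of the union graph $H = t_1 \cup t_2 \cup t_3 \cup t_4$. For each $H$ the joint expectation depends only on the partition of $V(H)$ induced by equal colors and is therefore an explicit polynomial in $c^{-1}$. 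After dividing by $\Var(T_3(G_n))^2$ (itself a known polynomial in $c^{-1}$ in $N(K_3, G_n)$ and $N(\triangle_2, G_n)$ by \eqref{eq:variance_triangle}) and subtracting the Gaussian value $3$, I expect to obtain an identity
\[
\E(Z_3(G_n)^4) - 3 = \sum_H \frac{\alpha_H(c)\, N(H, G_n)}{\bigl(N(K_3, G_n) + N(\triangle_2, G_n)\bigr)^2} + \text{lower-order terms},
\]
summed over the finite list of isomorphism classes of unions of four triangles, with coefficients $\alpha_H(c)$ that are rational in $c^{-1}$.

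The heart of Part (1) is the combinatorial identification step: I need to verify that among the $H$'s with nonzero $\alpha_H(c)$ one finds the four-pyramid $\triangle_4$ and the three cyclic four-triangle configurations that define $b(G_n)$, and that for every $c \geq 5$ each nonzero $\alpha_H(c)$ is strictly positive and bounded below by a constant depending only on $c$. Granted this, one has $R_1 + R_2 \lesssim_c \E(Z_3(G_n)^4) - 3$ (the constant $1$ in the numerator of $R_1$ is harmless because when $\Var(T_3(G_n))$ remains bounded the bound \eqref{13} is trivial). Substituting into Theorem \ref{T1} and using that in the small-excess regime $R_2 \leq R_2^{1/4}$, one obtains $R_1^{1/4} + R_2 \lesssim_c (\E(Z_3(G_n)^4) - 3)^{1/4}$, which raised to the power $1/5$ yields the advertised rate $(\E(Z_3(G_n)^4) - 3)^{1/20}$.

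For Part (2), the strategy is to construct, for each $c \in \{2, 3, 4\}$, a graph sequence on which some $\alpha_H(c)$ is negative, and to engineer a graph so that the negative contribution exactly cancels the positive ones and drives $\E(Z_3(G_n)^4) \to 3$ while $Z_3(G_n)$ retains a non-Gaussian limit. A natural building block is the pyramid $\triangle_{s_n}$ with $s_n \to \infty$: conditional on the shared edge being monochromatic, $T_3(\triangle_{s_n})$ equals the number of the remaining $s_n$ vertices that receive that color, so the limit of the standardized count has a chi-squared-type component rather than a Gaussian. I would take $G_n$ to be a disjoint union of copies of pyramids of suitable sizes (possibly together with isolated triangles as ballast), tuning the sizes so that the $c$-dependent weighted sum of $N(H, G_n)$ giving $\E(Z_3(G_n)^4) - 3$ tends to $0$, while the variance remains dominated by the pyramid component and hence the limit remains non-Gaussian. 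The main obstacle I foresee is the bookkeeping of the fourth-moment expansion: enumerating the isomorphism classes, computing every $\alpha_H(c)$ in closed form, and verifying the sign pattern that singles out $c \geq 5$; once these coefficients are in hand the counterexamples in Part (2) should reduce to a linear-algebraic tuning of the graph parameters.
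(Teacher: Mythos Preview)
Your plan for Part~(1) is essentially the paper's own argument (Lemma~\ref{l2}): expand the fourth moment as a sum over isomorphism types of unions of four triangles, verify that for $c\ge 5$ every coefficient is strictly positive, and conclude that both $N(\triangle_4,G_n)/\sigma^4$ and $b(G_n)/\sigma^4$ are dominated by $\E(Z_3(G_n)^4)-3$, whence \eqref{13} follows from \eqref{12}. One small correction: $b(G_n)$ is not itself one of the subgraph counts in the expansion but a weighted $4$-cycle sum in the $d_{ij}$'s; what you actually need is the (easy) observation that $b(G_n)\lesssim\sum_s N(\triangle_s,G_n)+\sum_s N(H_s,G_n)$, since every quadruple of triangles contributing to $b(G_n)$ forms one of the listed configurations.

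Part~(2), however, has a real gap: pyramids together with isolated triangles cannot do the job. These building blocks only produce the counts $N(\triangle_s,\cdot)$, $1\le s\le 4$, and for $2\le c\le 4$ the dominant coefficient $\delta_4$ is negative. To cancel the contribution $\delta_4\binom{n}{4}\asymp n^4$ of a single large pyramid $\triangle_n$ with $m$ isolated triangles (coefficient $\delta_1$) you would need $m\asymp n^4$; but then the isolated triangles carry variance $\asymp m\asymp n^4$, swamping the pyramid's variance $\asymp n^2$, so the pyramid drops out of $Z_3(G_n)$ and the limit is Gaussian. The same tension defeats any mixture of pyramids: if one pyramid $\triangle_{n_1}$ retains a nonvanishing share of the total variance, then $n_1^4/(\sum_i n_i^2)^2$ stays bounded away from zero and the fourth-moment gap does not vanish. (At $c=2$ all four $\delta_s$ are negative, so there is literally nothing to cancel against.) The paper's resolution is to introduce a second, genuinely different building block $\mathcal{B}_{n'}^{\triangle}$ (Example~\ref{example2}), which has $N(\triangle_2,\mathcal{B}_{n'}^{\triangle})=0$ and variance $\asymp n'$, yet satisfies $N(H_{16},\mathcal{B}_{n'}^{\triangle})=\binom{n'}{2}\asymp (n')^2$ with $h_{16}>0$ for all $c\ge 2$. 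Choosing $n'\asymp n^2$ makes the two variances comparable while $h_{16}\binom{n'}{2}$ exactly offsets $\delta_4\binom{n}{4}$; since $\mathcal{B}_{n'}^{\triangle}$ itself has a non-Gaussian (mixture-of-normals) limit, so does the disjoint union. The idea you are missing is precisely a component whose positive-coefficient subgraph count grows \emph{quadratically} in its own variance.
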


The result above shows that, unlike for monochromatic edges where the fourth moment phenomenon holds for all $c \geq 2$, the fourth moment phenomenon for monochromatic triangles is more subtle. Here, $\E (Z_3(G_n)^4) \rightarrow 3$ implies the asymptotic normality of $Z_3(G_n)$ only if $c \geq 5$, in which case we can get a quantitative error rate as in \eqref{13}. To understand why the fourth moment phenomenon fails for $2 \leq c \leq 4$,  we compute in Lemma \ref{l2} the fourth moment difference $\E(Z_3(G_n)^4) - 3$ precisely. This involves keeping track of the different subgraphs (and their coefficients) which arise when the fourth moment is expanded out as a sum (over the various graphs formed by the union of 4 triangles). The calculations are tedious, but in the end we arrive at the following rather surprising observation: For $c\geq 5$, all the subgraph coefficients in the fourth moment difference are positive, however, for $2 \leq c \leq 4$ certain coefficients can be negative. Therefore, for $2 \leq c \leq 4$, it is possible to construct graphs such that these coefficients cancel each other and the fourth moment converges to 3, but the corresponding subgraph counts are too large for a CLT to hold (see Section \ref{sec4} details). It remains open to show whether counts of general monochromatic cliques or subgraphs have a CLT and fourth moment phenomenon as in \eqref{13} (see the discussion in Section \ref{sec6} for more details on this problem and future directions).

\begin{remark}\label{remark:4moment} 
The fourth moment phenomenon was first discovered by Nualart and Peccati \cite{NuPe05}, who showed that the convergence of the first, second, and fourth moments to $0, 1$, and $3$, respectively, guarantees asymptotic normality for a sequence of multiple stochastic Wiener-It\^o integrals of fixed order. Later, Nourdin and Peccati \cite{NoPe09} provided an error bound for the fourth moment theorem of \cite{NuPe05}. This prompted a  wave of major developments and, over the years, the fourth moment phenomenon has emerged as a  ubiquitous principle governing the central limit theorems for various non-linear functionals of random fields.  We refer the reader to the book \cite{book_np} for an introduction to the topic and  website \url{https://sites.google.com/site/malliavinstein/home} for a list of the recent results.  Incidentally, related results for degenerate $U$-statistics of a fixed order of independent random variables were first obtained by de Jong \cite{de87, de90}. Here, in addition to the fourth moment condition, in general, an extra condition is  needed to control the maximum influence of the underlying independent random variables (cf.  \cite[Theorem 2.1]{de87} and \cite[Theorem 1.6]{DoKr19}). However, to the best of our knowledge, Theorem~\ref{T2} is the first example of the fourth moment phenomenon that is not a degenerate $U$-statistic of a fixed order. In addition, we do not need the extra condition of \cite{de87, de90} and \cite{DoPe17} controlling the maximum influence of the underlying independent random variables (see the discussion following Theorem \ref{T3} for more details on this condition). 
\end{remark}

Another natural question is whether the convergence of the fourth moment necessary for the CLT of $Z_3(G_n)$. We answer this question in the affirmative in the following theorem

\begin{theorem}\label{THM:4MOMENT}
Let $Z_3(G_n)$ be as defined in \eq{eq:Z}. Then for $c\geq 2$ fixed, $Z_3(G_n) \stackrel{D} \rightarrow N(0, 1)$ implies $\E (Z_3(G_n)^4) \to 3$.
\end{theorem}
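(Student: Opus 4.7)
The plan is to deduce the moment convergence $\E Z_3(G_n)^4 \to 3$ from the hypothesized distributional convergence by establishing uniform integrability. Since $Z_3(G_n) \stackrel{D}{\to} N(0,1)$ and $\E N(0,1)^4 = 3$, it suffices to exhibit some $q>4$ with
\begin{align*}
\sup_{n\geq 1} \E |Z_3(G_n)|^q < \infty,
\end{align*}
from which uniform integrability of $Z_3(G_n)^4$, and hence the fourth-moment convergence, follow by the de la Vall\'ee-Poussin criterion and the standard moment-convergence lemma.

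To produce such a bound, I would view $T_3(G_n)$ as a polynomial of degree at most $3$ in the independent colors $X_1,\ldots,X_{|V(G_n)|}$, each uniform on $\{1,\ldots,c\}$. Expanding $\bm 1\{X_{s_1}=X_{s_2}=X_{s_3}\} = \sum_{a=1}^c \bm 1\{X_{s_1}=a\}\bm 1\{X_{s_2}=a\}\bm 1\{X_{s_3}=a\}$ in \eqref{eq:T3Gn} confirms that each summand of $T_3(G_n)$ depends on exactly three of the $X_v$'s. Consequently, the Hoeffding/ANOVA decomposition
\begin{align*}
T_3(G_n) - \E T_3(G_n) = U_1(G_n) + U_2(G_n) + U_3(G_n),
\end{align*}
with $U_k(G_n) = \sum_{S\subseteq V(G_n),\,|S|=k} h_S(X_v : v\in S)$ and each $h_S$ mean zero conditional on every strict subset of its arguments, terminates at order $3$. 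The three components are $L^2$-orthogonal and satisfy $\sum_{k=1}^{3} \E U_k(G_n)^2 = \Var T_3(G_n)$.

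The decisive step is the hypercontractive inequality on the product probability space $\bigl(\{1,\ldots,c\}^{|V(G_n)|},\text{uniform}\bigr)$: for each $k\in\{1,2,3\}$ and each $q > 2$,
\begin{align*}
\|U_k(G_n)\|_q \leq C(q,c,k)\, \|U_k(G_n)\|_2,
\end{align*}
where $C(q,c,k)$ depends only on $q,c,k$. This is the multi-alphabet extension of Bonami--Beckner (see, e.g., \cite{book_np}), and the constants are independent of $n$ because hypercontractivity tensorizes over the coordinates and, with $c$ fixed, the log-Sobolev constant of the uniform distribution on $\{1,\ldots,c\}$ is bounded away from $0$. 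Summing in $k$ together with $\|U_k(G_n)\|_2 \leq \sqrt{\Var T_3(G_n)}$ then gives $\|Z_3(G_n)\|_q \leq K(q,c)$ uniformly in $n$, and choosing any $q>4$ closes the argument.

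The principal obstacle is the technical bookkeeping required to guarantee that the hypercontractive constant really is uniform in $n$ and in the topology of $G_n$; this is precisely where the assumption that $c$ is fixed enters in an essential way, since allowing $c$ to grow with $n$ could drive the log-Sobolev constant to $0$ and invalidate the uniform moment bound. No additional input about the graph structure should be needed beyond what is already supplied by the normalization $\|Z_3(G_n)\|_2 = 1$.
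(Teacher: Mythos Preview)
Your proposal is correct and takes a genuinely different route from the paper's own proof. Both arguments reduce the theorem to showing $\sup_n \E|Z_3(G_n)|^q < \infty$ for some (indeed every) $q>4$, and then invoke uniform integrability. The difference lies in how that moment bound is obtained. The paper writes $Z_3(G_n)$ as the sum of a degree-2 piece $T_1(K_3,G_n)$ and a degree-3 piece $T_2(K_3,G_n)$ (essentially your $U_2$ and $U_3$; the degree-1 component vanishes here), expands the $r$-th moment of each piece as a sum over multi-(hyper)graphs, and then controls those sums using Friedgut-type extremal inequalities for weighted hypergraph counts together with the identity $\Var(T_3(G_n))\asymp N(K_3,G_n)+N(\triangle_2,G_n)$. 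Your argument bypasses all of this combinatorics by invoking hypercontractivity on the product space $\{1,\dots,c\}^{|V(G_n)|}$: since $T_3(G_n)-\E T_3(G_n)$ has Hoeffding degree $\leq 3$ and the single-site log-Sobolev constant is bounded away from zero for fixed $c$, the Bonami--Beckner inequality tensorizes to give $\|Z_3(G_n)\|_q \leq K(q,c)$ directly from $\|Z_3(G_n)\|_2=1$.

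Your approach is shorter, more conceptual, and immediately generalizes to monochromatic $r$-cliques (or any fixed subgraph $H$), since the Hoeffding degree is always bounded by $|V(H)|$. The paper's combinatorial route, on the other hand, makes the graph-theoretic structure of the moments explicit and is potentially more informative if one later wants to track the dependence on $c$ when $c=c_n\to\infty$; you correctly note that this is exactly where hypercontractivity would break down, since the log-Sobolev constant of the uniform measure on $\{1,\dots,c\}$ tends to zero. One minor point: the reference \cite{book_np} is primarily about Wiener chaos; for the discrete multi-alphabet hypercontractive inequality you want, a more apt citation would be the Bonami--Gross line (or, e.g., O'Donnell's \emph{Analysis of Boolean Functions}, which treats general finite product spaces).
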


This result shows the necessity of the fourth moment condition, for all  $c \geq 2$ fixed. This combined with Theorem \ref{T2} above, shows that,   for any fixed $c \geq 5$, $Z_3(G_n) \stackrel{D} \rightarrow N(0, 1)$ if and if $\E (Z_3(G_n)^4) \to 3$.  The proof of Theorem \ref{THM:4MOMENT} is given in Section \ref{sec:4momentpf}. The proof shows that, for any $c \geq 2$, all the moments of $Z_3(G_n)$ are bounded, which implies the convergence of the fourth moments by uniform integrability. To show this, we use an estimate from extremal combinatorics which bounds the number of copies of a hypergraph $F$ in another weighted hypergraph $H$ in terms of the fractional stable number of $F$ (see Corollary \ref{cor:alon_exponent} in Appendix \ref{sec:moment_hypergraph} for the precise statement).

\subsection{Quantitative Bounds for Monochromatic Edges} 
\label{sec:Z2}

As mentioned before, the precise conditions for the asymptotic normality of the number of monochromatic edges are well-understood \cite{BDM}. For instance, when $c$ is fixed, then  \cite[Theorem 1.3]{BDM} shows that $Z_2(G_n) \stackrel{D} \rightarrow N(0, 1)$ if and only if $N(C_4, G_n)=o(|E(G_n)|^2)$, where $C_s$ denotes the cycle of length $s \geq 3$, which is, in fact, equivalent to the fourth moment condition $\E(Z_2(G_n)^4) \rightarrow 3$. However, the proofs in \cite{BDM} used the method of moments and do not provide any rate of convergence. To showcase the promise our approach and quantify the asymptotic results in \cite{BDM}, we apply the martingale CLT approach described above to $Z_2(G_n)$. 

\begin{theorem}\label{T3} Let $Z_2(G_n)$ be as defined in \eqref{eq:Z2Gn}. Then, 
\begin{align}\label{eq:Z2CLT}
\sup_{x \in \R} |\P(Z_2(G_n) \leq x)-\Phi(x)|\leq K \left(\frac{c}{|E(G_n)|} + \frac{1}{\sqrt{|E(G_n)|}}  + \frac{N(C_4, G_n)}{c |E(G_n)|^2} \right)^{\frac{1}{5}},
\end{align} 
where $K >0$ is a universal constant (not depending on $n$ and $c$). 
\end{theorem}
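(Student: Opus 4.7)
The plan is to apply the quantitative martingale central limit theorem of Heyde and Brown \cite{HeBr70} to a Hoeffding decomposition of $T_2(G_n)$, in the same spirit as for Theorem \ref{T1} but with a much simpler vertex ordering. First I would relabel the vertex set as $v_1, v_2, \ldots, v_n$ in non-increasing order of degree, so that $d(v_1) \geq d(v_2) \geq \cdots \geq d(v_n)$. Setting $\mathcal{F}_k := \sigma(X_{v_1}, \ldots, X_{v_k})$ and applying a direct Hoeffding-type decomposition, the martingale differences $Y_k := \E[T_2(G_n) \mid \mathcal{F}_k] - \E[T_2(G_n) \mid \mathcal{F}_{k-1}]$ simplify to
\[
Y_k = \sum_{i<k} a_{v_i v_k}(G_n) \left(\bm 1\{X_{v_i} = X_{v_k}\} - \tfrac{1}{c}\right).
\]
Noting that $\Var(T_2(G_n)) = \frac{c-1}{c^2}|E(G_n)|$, the normalized differences $\xi_k := Y_k/\sqrt{\Var(T_2(G_n))}$ form a martingale difference array summing to $Z_2(G_n)$ with $\sum_k \E[\xi_k^2] = 1$. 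The Heyde-Brown bound then reduces the task to controlling $\sum_k \E[\xi_k^4]$ and $\E(V_n^2 - 1)^2$, where $V_n^2 := \sum_k \E[\xi_k^2 \mid \mathcal{F}_{k-1}]$.

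For the Lyapunov sum, conditional on $X_{v_k}$ the summands defining $Y_k$ are independent, bounded, mean-zero, with conditional variance $(c-1)/c^2$ and conditional fourth moment at most $1/c$. A conditional Rosenthal inequality then gives $\E[Y_k^4] \lesssim (d_k^{<})^2/c^2 + d_k^{<}/c$, where $d_k^{<} := \sum_{i<k} a_{v_i v_k}(G_n)$. The key combinatorial observation, enabled by the non-increasing degree ordering, is that $d_k^{<} \leq \sqrt{2|E(G_n)|}$ for every $k$: if $k \leq \sqrt{2|E(G_n)|}$ this is trivial, while if $k > \sqrt{2|E(G_n)|}$ then $k \cdot d(v_k) \leq \sum_{j \leq k} d(v_j) \leq 2|E(G_n)|$ forces $d(v_k) \leq \sqrt{2|E(G_n)|}$. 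Combined with $\sum_k d_k^{<} = |E(G_n)|$, this gives $\sum_k (d_k^{<})^2 \leq \sqrt{2}\,|E(G_n)|^{3/2}$, and after dividing by $\Var(T_2(G_n))^2$ we obtain $\sum_k \E[\xi_k^4] \lesssim \tfrac{c}{|E(G_n)|} + \tfrac{1}{\sqrt{|E(G_n)|}}$ with a universal constant.

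For the conditional-variance term, a direct calculation---using that $\Cov(\bm 1\{X_i = X_j\}, \bm 1\{X_{i'} = X_{j'}\})$ is nonzero only when $\{i, j\} = \{i', j'\}$---yields
\[
\E(V_n^2 - 1)^2 = \frac{4}{(c-1)|E(G_n)|^2} \sum_{i<j} w_{ij}^2, \quad w_{ij} := \sum_{k > j} a_{v_i v_k}(G_n)\, a_{v_j v_k}(G_n).
\]
Expanding $w_{ij}^2$ separates into a diagonal piece $\sum_k \binom{d_k^{<}}{2}$, again absorbed into the $|E(G_n)|^{3/2}$ estimate above, and an off-diagonal piece $2 \sum_{i<j<k<k'} a_{v_i v_k} a_{v_j v_k} a_{v_i v_{k'}} a_{v_j v_{k'}}$ counting labeled ``butterfly'' configurations. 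Each such configuration contains the $C_4$ with cyclic order $v_i \to v_k \to v_j \to v_{k'} \to v_i$, and since the constraint $i<j<k<k'$ pins down at most one labeling per $4$-subset, this piece is bounded by a constant multiple of $N(C_4, G_n)$. Therefore
\[
\E(V_n^2 - 1)^2 \lesssim \frac{1}{\sqrt{|E(G_n)|}} + \frac{N(C_4, G_n)}{c|E(G_n)|^2},
\]
and combining with the Lyapunov bound via Heyde-Brown delivers \eqref{eq:Z2CLT}.

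The main technical obstacle is the combinatorial accounting of the off-diagonal piece of $\sum_{i<j} w_{ij}^2$ (reducing it to the $C_4$-count), together with tracking the $c$-dependence carefully so that the final constant $K$ is universal. Conceptually, the non-increasing-degree ordering is essential: under an arbitrary ordering, $d_k^{<}$ could be as large as $n-1$, in which case the Lyapunov bound would not close. This illustrates the degree ordering as the simpler ``edge analogue'' of the more delicate score-based ordering needed for Theorem \ref{T1}.
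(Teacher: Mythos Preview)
Your proposal is correct and follows essentially the same route as the paper: non-increasing degree ordering, Hoeffding/martingale decomposition, and the Heyde--Brown bound, with the same reduction of the Lyapunov and conditional-variance terms to the $|E(G_n)|^{3/2}$ two-path estimate and the $N(C_4,G_n)$ count. The only cosmetic differences are that you work with $\E(V_n^2-1)^2$ rather than $\Var(\sum_t U_t^2)$ (equivalent up to the Lyapunov term), and you give a direct self-contained proof of $d_k^{<}\le\sqrt{2|E(G_n)|}$ where the paper cites \cite{BaHoJa92} for the equivalent inequality $\sum_{(u,v)\in E(G_n)}\mathrm{deg}(u)\wedge\mathrm{deg}(v)\le\sqrt{2}\,|E(G_n)|^{3/2}$.
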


The proof of Theorem \ref{T3} is given below in Section \ref{sec2}. In this case, $Z_2(G_n)$ is a degenerate $U$-statistic of order 2, and the proof proceeds by writing $Z_2(G_n)$ as a martingale difference sequence and using a quantitive version of the martingale CLT \cite{HeBr70}, as before. The martingale CLT was used by de Jong \cite{de87, de90} to deal with degenerate $U$-statistics of a fixed order of independent random variables. More recently, D\"obler and Peccati \cite{DoPe17} used Stein's method to prove an error bound for de Jong's CLT. However, this error bound is not directly applicable in our problem, because it involves an additional term controlling the maximum influence (denoted by $\rho_n$ in  \cite[Theorem 1.3]{DoPe17}) which, in this case is proportional to $\Delta(G_n)/|E(G_n)|$, where $\Delta(G_n)$ is the maximum degree of $G_n$. It is easy to see that it is not necessary for this term to vanish for the CLT of $Z_2(G_n)$ to hold: For example, if $G_n=K_{1, n}$ is the star graph on $n$ vertices, with the central vertex labelled $1$ and the other vertices  labeled $\{2, 3, \ldots, n\}$, then $\Delta(G_n)/|E(G_n)|=O(1)$. Nevertheless, the CLT holds, because in this case $Z_2(K_{1, n})$ is a sum of independent random variables (note that the collection of edge indicators $\{\bm 1\{X_1= X_v\}\}_{1 \leq v \leq n-1}$ are independent).  Hence, the bound in \cite{DoPe17} is not directly applicable to our problem. We circumvent this issue by carefully ordering the vertices of $G_n$ while constructing the martingale, which ensures the term involving the maximum degree does not arise in the martingale CLT error terms. 

Note that in  Theorem \ref{T3} the dependence on the number of colors $c$ has been made explicit. Therefore, this result holds for any $c \geq 2$, fixed or depending on $n$. We discuss the consequences in the two cases separately: 

\begin{itemize} 

\item {\it $c \geq 2$ is fixed}: Here, by absorbing the dependence on $c$ in to the leading constant, the error term in \eqref{eq:Z2CLT} can be simplified as 
\begin{align}\label{eq:Z2CLT_I}
\sup_{x \in \R} |\P(Z_2(G_n) \leq x)-\Phi(x)|\lesssim_c \left( \frac{1}{\sqrt{|E(G_n)|}}  + \frac{N(C_4, G_n)}{ |E(G_n)|^2} \right)^{\frac{1}{5}}.
\end{align} 
This shows that, when $c$ is fixed, $Z_2(G_n) \stackrel{D} \rightarrow  N(0, 1)$, whenever $|E(G_n)| \rightarrow \infty$ such that $N(C_4, G_n)=o(|E(G_n)|^2)$. Thus, Theorem \ref{T3} not only recovers the result in \cite[Theorem 1.3]{BDM}, it provides an explicit rate of convergence. 

\item {\it $c=c_n \rightarrow \infty$ such that $|E(G_n)|/c \rightarrow \infty$}: In this case, using the  bound $N(C_4, G_n) \lesssim |E(G_n)|^2$ (see, for example, \cite[Theorem 1]{alon81}), the error term in \eqref{eq:Z2CLT} can be simplified as 
\begin{align}\label{eq:Z2CLT_II}
\sup_{x \in \R} |\P(Z_2(G_n) \leq x)-\Phi(x)|\lesssim \left( \frac{c}{|E(G_n)|} + \frac{1}{\sqrt{|E(G_n)|}}  + \frac{1}{c} \right)^{\frac{1}{5}}.
\end{align} 
Again, this reaffirms \cite[Theorem 1.2]{BDM} which shows that $Z_2(G_n) \stackrel{D} \rightarrow  N(0, 1)$, whenever  $c \rightarrow \infty$ such that $|E(G_n)|/c \rightarrow \infty$. The rate in \eqref{eq:Z2CLT_II} is, in general, worse than that in \cite[Theorem 1.1]{Fa15}, which was obtained by an application of Stein's method for normal approximation. However, the technique in \cite{Fa15} was unable to recover the precise conditions for asymptotic normality in the case where  $c$ is fixed. On the other hand, Theorem \ref{T3} provides a unified proof for the case $c$ is fixed and $c \rightarrow \infty$, and, as discussed above, obtains the exact conditions for the asymptotic normality of $Z_2(G_n)$ in both cases. 
\end{itemize}
Finally, because the conditions for the limiting normality of $Z_2(G_n)$ arising from \eqref{eq:Z2CLT_I} and \eqref{eq:Z2CLT_II} above, are equivalent to the fourth moment condition, the error term in \eqref{eq:Z2CLT} can be bounded in terms of fourth moment difference $\E(Z_2(G_n)^4) - 3$ (see Remark \ref{remark:Z2}).

\subsection{Organization} The rest of the paper is organized as follows: The proof of Theorem \ref{T3} is given in Section~\ref{sec2}. In Section \ref{sec3} we prove Theorem \ref{T1} and provide examples illustrating the necessity of the error terms in \eqref{12}. The proofs of Theorem \ref{T2} and Theorem \ref{THM:4MOMENT} are given in Section \ref{sec4} and Section \ref{sec:4momentpf}, respectively. In Section \ref{sec6} we summarize our results and discuss future directions. Few technical details are given in the Appendix.


\section{Proof of Theorem \ref{T3}}\label{sec2}

Assume without loss of generality that the vertices of $G_n$ are labelled $\{1, 2, \ldots, |V(G_n)|\}$ in non-increasing order of degrees, that is, $\mathrm{deg}(1) \geq \mathrm{deg}(2) \geq \ldots \geq \mathrm{deg}(|V(G_n)|)$, where $\mathrm{deg}(v)$ denotes the degree of the vertex $v \in \{1, 2, \ldots, |V(G_n)|\}$.  We now follow \cite{de87} and write $Z_2(G_n)$ (recall definition from \eqref{eq:Z2Gn}) as a sum of martingale differences. Throughout this proof we will denote 
\begin{align}\label{eq:varT2Gn}
\sigma^2=\Var(T_2(G_n))= \frac{|E(G_n)|}{c}\left(1-\frac{1}{c}\right), 
\end{align}
where the last equality above follows from the definition of $Z_2(G_n)$ and noting that the covariance of $\bm 1\{X_{s_1}=X_{s_2}\}-\frac{1}{c}$ and $\bm 1\{X_{s_1}=X_{s_3}\}-\frac{1}{c}$ is zero, for $1 \leq s_1 < s_2 < s_3 \leq |V(G_n)|$. Now, define $ W_{s t}=a_{st}(G_n)(\bm 1\{X_{s}=X_{t}\}-\frac{1}{c})$, for $1 \leq s < t \leq |V(G_n)|$, and write $Z_2(G_n)$ as 
$$Z_2(G_n)=\sum_{1\leq t \leq |V(G_n)| }U_{t} \quad \text{where} \quad U_t=\frac{1}{\sigma} \sum_{s: s < t} W_{st}.$$
Note that $\{U_t\}_{1 \leq t \leq |V(G_n)| }$ is a martingale difference sequence with respect to the filtration sequence $\mathcal{F}_t=\sigma(\{X_1,\dots, X_t\})$. To see this note that 
$$\E(U_t|\cF_{t-1})=\frac{1}{\sigma} \sum_{s: s < t} a_{st}(G_n)\E\left(\bm 1\{X_s=X_t\} -\frac{1}{c}\Big|X_s\right)= 0.$$
Now, from display (1) of \cite{HeBr70} (with $s_n=1$, $\delta=1$, $Y_t=U_t$ and $\sigma_t^2=E(U_t^2|X_1,\dots, X_{t-1})$, for $1 \leq t \leq |V(G_n)|$), we have
\begin{align}\label{1}
\sup_x |\P(Z_2(G_n) \leq x)-\Phi(x)| & =\sup_x \left|\P\left(\sum_{1\leq t\leq |V(G_n)|} U_t\leq x\right)-\Phi(x)\right| \nonumber \\ 
& \lesssim \left\{  \sum_{t=1}^{|V(G_n)|} \E(U_t^4)+ \Var\left(\sum_{t=1}^{|V(G_n)|} U_t^2\right)  \right\}^{\frac{1}{5}}.
\end{align}

In order to bound the two terms on the RHS above, we need the following estimates, the second of which is due to the crucial fact that the vertices are ordered in the non-increasing order of the degrees. 

\begin{lemma}\label{lm:2} For $\{ W_{s_1 s_2}\}_{1 \leq s_1 < s_2 \leq |V(G_n)| }$ as defined above, the following hold: 
\begin{enumerate}
\item[(a)] $\sum_{1\leq s_1< s_2 \leq |V(G_n)| }\E( W_{s_1 s_2}^4) \lesssim \frac{|E(G_n)|}{c}$. 

\item[(b)] $\sum_{1\leq s_1 < s_2 < s_3\leq |V(G_n)| } \E(W_{s_1 s_3}^2 W_{s_2 s_3}^2) \lesssim  \frac{|E(G_n)|^{\frac{3}{2}}}{c^2}$,

\item[(c)] $\sum_{1\leq s_1 < s_2 < s_3< s_4 \leq |V(G_n)| } \E(W_{s_1 s_3}W_{s_2 s_3}W_{s_1 s_4 }W_{s_2  s_4 }) \lesssim \frac{N(C_4, G_n)}{c^3}$. 
\end{enumerate}
\end{lemma}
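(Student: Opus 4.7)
I will prove the three bounds in turn: (a) and (c) are routine moment computations combined with edge counting, while (b) is the main combinatorial step and crucially exploits the non-increasing degree ordering of the vertices. For (a), since $a_{s_1 s_2}(G_n)^2 = a_{s_1 s_2}(G_n)$ and $\bm 1\{X_{s_1} = X_{s_2}\}$ is $\mathrm{Bernoulli}(1/c)$, a direct computation gives
$\IE[W_{s_1 s_2}^4] = a_{s_1 s_2}(G_n)\left[\frac{(1-1/c)^4}{c} + \frac{1-1/c}{c^4}\right] \lesssim a_{s_1 s_2}(G_n)/c$ for $c \geq 2$, and summing over pairs immediately yields the claimed bound.

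For (b), conditioning on $X_{s_3}$ makes $\bm 1\{X_{s_1} = X_{s_3}\}$ and $\bm 1\{X_{s_2} = X_{s_3}\}$ conditionally independent $\mathrm{Bernoulli}(1/c)$ variables, so $\IE[W_{s_1 s_3}^2 W_{s_2 s_3}^2] = a_{s_1 s_3}(G_n) a_{s_2 s_3}(G_n)\left(\frac{1-1/c}{c}\right)^2 \lesssim a_{s_1 s_3}(G_n) a_{s_2 s_3}(G_n)/c^2$. Writing $d_<(t) := |\{s < t : a_{st}(G_n) = 1\}|$, the sum of these indicators over $s_1 < s_2 < s_3$ equals $S := \sum_t \binom{d_<(t)}{2}$, so it remains to show $S \lesssim |E(G_n)|^{3/2}$. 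I would split the sum at the threshold $T := \lfloor |E(G_n)|^{1/2} \rfloor$: for $t \leq T$ use $d_<(t) \leq t - 1 \leq T$, contributing at most $T \cdot T^2/2 \asymp |E(G_n)|^{3/2}$; for $t > T$ the non-increasing degree ordering forces $T \cdot \mathrm{deg}(T) \leq \sum_{i \leq T}\mathrm{deg}(i) \leq 2|E(G_n)|$, hence $\mathrm{deg}(t) \leq 2|E(G_n)|/T$, and summing $d_<(t)^2 \leq \mathrm{deg}(t) \cdot 2|E(G_n)|/T$ over such $t$ gives at most $4|E(G_n)|^2/T \asymp |E(G_n)|^{3/2}$. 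This splitting step is the main obstacle, and the degree ordering is genuinely needed: on the star $K_{1,n-1}$ with the centre labelled last one has $d_<(n) = n-1$, so $S \asymp |E(G_n)|^2 \gg |E(G_n)|^{3/2}$.

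For (c), set $Y_{ij} := \bm 1\{X_{s_i} = X_{s_j}\} - \frac{1}{c}$ and expand $Y_{13} Y_{23} Y_{14} Y_{24}$ into $2^4 = 16$ terms. For each $J \subseteq \{(1,3),(2,3),(1,4),(2,4)\}$ one has $\IP(X_{s_i} = X_{s_j}\text{ for all }(i,j) \in J) = c^{-(4-k(J))}$, where $k(J)$ is the number of connected components of the graph on four vertices with edge set $J$; a direct enumeration of the sixteen subsets then yields $\IE[Y_{13} Y_{23} Y_{14} Y_{24}] = \frac{1}{c^3} - \frac{1}{c^4} \lesssim \frac{1}{c^3}$. (Equivalently, if $f_a(X) := \bm 1\{X=a\}-1/c$ and $T_{ac} := \IE[f_a(X)f_c(X)]$, then $T = \frac{1}{c}I - \frac{1}{c^2}J$ has eigenvalues $0$ and $1/c$ with multiplicity $c-1$, and the quantity equals $\mathrm{tr}(T^4) = (c-1)/c^4$.) Finally, the indicator $a_{s_1 s_3}(G_n) a_{s_2 s_3}(G_n) a_{s_1 s_4}(G_n) a_{s_2 s_4}(G_n)$ equals $1$ exactly when $\{s_1, s_2, s_3, s_4\}$ spans a $K_{2,2} = C_4$ with the bipartition $\{s_1, s_2\} \sqcup \{s_3, s_4\}$; since every $C_4$ admits a unique bipartition into its two colour classes, summing these indicators over $s_1 < s_2 < s_3 < s_4$ is bounded above by $N(C_4, G_n)$, yielding (c).
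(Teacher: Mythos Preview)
Your proof is correct. Parts (a) and (c) proceed just as in the paper, only with more computational detail: the paper simply remarks that the leading term of the fourth moment in (a) is $O(1/c)$ and that the leading term in (c) is $\E[\bm 1\{X_{s_1}=X_{s_2}=X_{s_3}=X_{s_4}\}]=1/c^3$; your trace identity $\E[Y_{13}Y_{23}Y_{14}Y_{24}]=\tr(T^4)=(c-1)/c^4$ is a pleasant alternative way to reach the same value.

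For part (b) the two arguments take different routes to the same estimate. The paper first sums over $s_1$ to get
\[
\sum_{s_1<s_2<s_3} a_{s_1 s_3}(G_n)\,a_{s_2 s_3}(G_n)\ \le\ \sum_{s_2<s_3} a_{s_2 s_3}(G_n)\,\mathrm{deg}(s_3),
\]
then uses the degree ordering ($\mathrm{deg}(s_2)\ge\mathrm{deg}(s_3)$) to rewrite the right-hand side as $\sum_{(u,v)\in E(G_n)} \mathrm{deg}(u)\wedge\mathrm{deg}(v)$, and finally invokes the inequality $\sum_{(u,v)\in E}\mathrm{deg}(u)\wedge\mathrm{deg}(v)\le\sqrt{2}\,|E(G_n)|^{3/2}$ from \cite{BaHoJa92}. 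Your threshold split at $T=\lfloor|E(G_n)|^{1/2}\rfloor$ instead bounds $\sum_t\binom{d_<(t)}{2}$ directly and self-containedly, using $d_<(t)\le t-1$ for $t\le T$ and the ordering-based bound $\mathrm{deg}(t)\le 2|E(G_n)|/T$ for $t>T$; in effect you are reproving a variant of the cited inequality inline. Both arguments genuinely need the ordering, as your star example confirms.
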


\begin{proof} To begin with note that 
$$\sum_{1\leq s_1< s_2 \leq |V(G_n)| }\E( W_{s_1 s_2}^4 ) =  \sum_{1\leq s_1< s_2 \leq |V(G_n)| } a_{s_1 s_2}(G_n) \E\left(\bm 1\{X_{s_1}=X_{s_2} \} -\frac{1}{c} \right)^4 \lesssim \frac{|E(G_n)|}{c},$$
because the leading term in $\E(\bm 1\{X_{s_1}=X_{s_2}\} -\frac{1}{c})^4$ is $O(1/c)$, since $c \geq  2$. This proves (a). 

Similarly for part (c), $\sum_{1\leq s_1 < s_2 < s_3< s_4\leq |V(G_n)| } \E(W_{s_1 s_3}W_{s_2 s_3}W_{s_1 s_4 }W_{s_2 s_4 }) \lesssim \frac{N(C_4, G_n)}{c^3}$, since the leading term is given by $\E(\bm 1\{X_{s_1}=X_{s_2}=X_{s_3}=X_{s_4}\})=\frac{1}{c^3}$.  
 
For (b), using $\mathrm{deg}(s_2)\geq \mathrm{deg}(s_3)$, gives 
\begin{align}
\sum_{1\leq s_1 < s_2 < s_3\leq |V(G_n)| } \E(W_{s_1 s_3}^2 W_{s_2 s_3}^2) & \lesssim \frac{1}{c^2} \sum_{1\leq s_1 < s_2 < s_3\leq |V(G_n)| }a_{s_1 s_3}(G_n) a_{s_2 s_3}(G_n) \nonumber \\ 
& \leq \frac{1}{c^2} \sum_{1 < s_2<s_3\leq |V(G_n)| }  a_{s_2 s_3}(G_n) \mathrm{deg}(s_3) \nonumber \\
& \leq \frac{1}{c^2}\sum_{(u, v) \in E(G_n)} \mathrm{deg}(u) \wedge \mathrm{deg}(v) \nonumber \\ 
& \lesssim \frac{|E(G_n)|^{\frac{3}{2}}}{c^2} , \nonumber 
\end{align}
where last step uses the inequality $\sum_{(u, v) \in E(G_n)} \mathrm{deg}(u) \wedge \mathrm{deg}(v) \leq \sqrt{2} |E(G_n)|^{3/2}$, from \cite[Page 37]{BaHoJa92}. 
\end{proof}

We now proceed to bound the two terms on the RHS of \eqref{1}. We begin with the     first term on the RHS of \eq{1},  
\begin{align}
\sum_{1\leq t \leq |V(G_n)| } \E (U_{t}^4)=&\frac{1}{\sigma^4}\sum_{1\leq t\leq |V(G_n)| }\E\left(\sum_{s: s<t}W_{s t} \right)^4 \nonumber \\
=&\frac{1}{\sigma^4} \sum_{1\leq t \leq |V(G_n)| } \E\left(\sum_{s: s < t}W_{s t}^2+2\sum_{s, s': s < s' < t}W_{s t}W_{s' t}\right)^2 \nonumber \\
=&\frac{1}{\sigma^4} \sum_{1\leq t \leq |V(G_n)| } \left(\sum_{s: s < t} \E W_{s t}^4+6\sum_{s, s': s < s' < t} \E W_{s t}^2W_{s' t}^2 \right) \tag*{(since the cross product terms have expectation zero)} \nonumber \\
\lesssim & \frac{c^2}{|E(G_n)|^2} \left(\frac{|E(G_n)|}{c} + \frac{|E(G_n)|^{\frac{3}{2}}}{c^2} \right) \tag*{(using Lemma \ref{lm:2} (a) and (b),  and $\sigma\asymp \left(\frac{|E(G_n)|}{c}\right)^\frac{1}{2}$)} \nonumber \\ 
\label{eq:4}=& \frac{c}{|E(G_n)|} + \frac{1}{\sqrt{|E(G_n)|}}. 
\end{align} 

For the second term on the RHS of \eq{1},  we have 
\begin{align}\label{eq:5}
\Var\left(\sum_{t=1}^{|V(G_n)|} U_{t}^2\right) =& \frac{1}{\sigma^4} \Var\left(\sum_{1 \leq s_1< s_2 \leq |V(G_n)| } W_{s_1 s_2}^2+2\sum_{1 \leq s_1 < s_2 < s_3 \leq |V(G_n)| } W_{s_1 s_3}W_{s_2 s_3}\right)\\
\lesssim &   \frac{1}{\sigma^4} \left[\Var\left(\sum_{1 \leq s_1 <  \leq |V(G_n)| } W_{s_1 s_2}^2 \right)+\Var\left(\sum_{1 \leq s_1 < s_2 < s_3 \leq |V(G_n)| } W_{s_1 s_3}W_{s_2 s_3}\right) \right]. 
\end{align}
Now, by ruling out all the zero-covariance terms, we get 
\begin{align}\label{eq:6}
\frac{1}{\sigma^4}  \Var\left(\sum_{1 \leq s_1< s_2 \leq |V(G_n)| } W_{s_1 s_2}^2 \right)= \frac{1}{\sigma^4}  \sum_{1 \leq s_1 < s_2 \leq |V(G_n)| } \Var(W_{s_1 s_2}^2)  & \leq \frac{1}{\sigma^4}  \sum_{1 \leq s_1 < s_2 \leq |V(G_n)| } \E(W_{s_1 s_2}^4) \nonumber \\ 
& \lesssim \frac{c}{|E(G_n)|}, 
\end{align}
and
\begin{align}\label{eq:7}
\frac{1}{\sigma^4} & \Var\left(\sum_{1\leq s_1 < s_2 < s_3 \leq |V(G_n)| } W_{s_1 s_3}W_{s_2 s_3} \right)  \nonumber \\ 
& = \frac{1}{\sigma^4} \left(  \sum_{1 \leq s_1 < s_2 < s_3 \leq |V(G_n)| } \E(W_{s_1 s_3}^2W_{s_2 s_3}^2) + 2\sum_{1 \leq s_1 < s_2 < s_3< s_4 \leq |V(G_n)| } \E (W_{s_1 s_3}W_{s_2 s_3}W_{s_1 s_4 }W_{s_2 s_4} ) \right) \nonumber \\ 
& \lesssim  \frac{1}{\sqrt{|E(G_n)|}} + \frac{N(C_4, G_n)}{c |E(G_n)|^2},  
\end{align}
where the last step uses Lemma \ref{lm:2} (a) and (c),  and $\sigma\asymp \left(\frac{|E(G_n)|}{c}\right)^\frac{1}{2}$. 

Plugging in \eqref{eq:4}, \eqref{eq:5}, \eqref{eq:6}, and \eqref{eq:7} to the RHS of \eqref{1}, the result follows. \hfill $\Box$

\begin{remark}\label{remark:Z2} The error term in \eqref{eq:Z2CLT} can be expressed in terms of fourth-moment difference $\E(Z_2(G_n)^4) - 3$. To this end, recall, from \eqref{eq:varT2Gn}, that  $\sigma^2=\frac{|E(G_n)|}{c}(1-\frac{1}{c}) \asymp \frac{|E(G_n)|}{c}$. Then by a direct calculation,  
$$\E(Z_2(G_n)^4) - 3 = \frac{\gamma_1 |E(G_n)| +  \gamma_2 N(K_3, G_n) + \gamma_3 N(C_4, G_n)}{\sigma^4} ,$$ 
where $\gamma_1=\frac{1}{c}\left(1-\frac{7}{c}+\frac{12}{c^2}-\frac{6}{c^3} \right)$, $\gamma_2=\frac{36}{c^2} \left(1-\frac{1}{c}\right) \left(1-\frac{2}{c}\right)$, and $\gamma_3=\frac{24}{c^3}\left(1-\frac{1}{c}\right)$. 
Now, using the well-known bound $N(K_3, G_n)\lesssim |E(G_n)|^{3/2}$ (see display (1) of \cite{alon81}) gives, for any $c \geq 2$, 
$$\left|\E(Z_2(G_n)^4) - 3 - \frac{\gamma_3 N(C_4, G_n) }{\sigma^4}  \right| \lesssim \frac{c}{|E(G_n)|} +  \frac{1}{\sqrt{|E(G_n)|}}.$$
Then using $\frac{\gamma_3 N(C_4, G_n) }{\sigma^4} \asymp \frac{N(C_4, G_n)}{c|E(G_n)|^2}$ together   with \eq{eq:Z2CLT}, implies
\begin{align*}
\sup_{x \in \R} |\P(Z_2(G_n) \leq x)-\Phi(x)|\lesssim |\E(Z_2(G_n)^4) - 3|^{\frac{1}{5}}+\left(\frac{c}{|E(G_n)|}+\frac{1}{\sqrt{|E(G_n)|}}\right)^{\frac{1}{5}},
\end{align*}
hence, $\E (T_2(G_n))= \frac{|E(G_n)|}{c} \to \infty$ and $\E(Z_2(G_n)^4)\to 3$ imply $Z_2(G_n) \to N(0,1)$ in distribution.
\end{remark}

\section{Proof of  Theorem~\ref{T1}}\label{sec3}

We begin by recalling the Hoeffding's decomposition of a square integrable function of independent random variables. 

\begin{definition}(\cite{Ho48}) Suppose $W$ is a square integrable function of the independent random variables $\{X_1,\dots, X_{|V(G_n)|}\}$. Then the Hoeffding's decomposition of $W$ is 
$$W=\sum_{I \subset \{1,\dots,|V(G_n)|\}} W_I,$$ 
such that 
\begin{itemize}
\item[(a)]$W_I$ is $\mathcal{F}_I$-measurable, where $\mathcal{F}_I$ is the $\sigma$-field generated by $\{X_i: i\in I\}$, and

\item[(b)] $\E(W_I | \mathcal{F}_J)=0$ almost surely, unless $I\subset J$.
\end{itemize}  
In fact, $W_I$ is almost surely uniquely determined by the above conditions and is given by $$W_I=\sum_{J\subset I} (-1)^{|I|-|J|} E(W | \mathcal{F}_J).$$
\end{definition}

We now begin the proof of Theorem \ref{T1}. Throughout this proof, recalling \eqref{eq:variance_triangle}, we will denote 
\begin{align}\label{eq:variance_T3}
\sigma^2:=\Var(T_3(G_n))=\frac{1}{c^2} \left(1-\frac{1}{c^2}\right) N(K_3, G_n)+2\left(\frac{1}{c^3}-\frac{1}{c^4}\right) N(\triangle_2, G_n). 
\end{align}
It is straightforward to compute the Hoeffding decomposition of $Y:=Z_3(G_n)$ in \eq{eq:Z} to be 
\begin{align}\label{eq:Z3_H}
Y=\frac{1}{\sigma}\left(\sum_{1 \leq s_1 < s_2 \leq |V(G_n)|} Y_{s_1 s_2} +  \sum_{1 \leq s_1 < s_2 < s_3 \leq |V(G_n)|} Y_{s_1 s_2 s_3}\right),
\end{align} 
where $Y_{s_1 s_2}=d_{s_1 s_2} \left(\frac{1}{c} \bm 1\{X_{s_1}=X_{s_2}\}-\frac{1}{c^2} \right)$, (recall $d_{s_1 s_2}=\sum_{s_3 \notin \{s_1, s_2\}} a_{s_1 s_2 s_3}(G_n)$, is the number of triangles with $(s_1, s_2)$ as an edge), and 
$$Y_{s_1 s_2 s_3}= a_{s_1 s_2 s_3}(G_n) \left( \left\{ \bm 1\{X_{s_1}=X_{s_2}=X_{s_3} \}-\frac{1}{c^2} \right\} -\frac{1}{c} \sum_{1 \leq a < b \leq 3} \left\{\bm 1\{X_{s_a}=X_{s_b}\} -\frac{1}{c} \right\} \right),$$
with $a_{s_1 s_2 s_3}(G_n):= a_{s_1 s_2}(G_n) a_{s_2 s_3}(G_n) a_{s_3 s_1}(G_n)$. 

Now, let 
\begin{align}\label{eq:U}
U_t=\frac{1}{\sigma}\left (\sum_{s: s< t} Y_{s t} +  \sum_{s, s': s < s' < t} Y_{s s' t} \right).
\end{align} 
Then $Y=\sum_{t=1}^{|V(G_n)|} U_t$, and from property (b) of the Hoeffding decomposition, $\{U_t\}_{t \geq 1}$ is a martingale difference sequence. Therefore, from \eq{1}, it suffices to bound
\begin{align}\label{eq:AB}
A=\sum_{t=1}^{|V(G_n)|} \E (U_t^4) \quad \text{ and } \quad B= \Var\left(\sum_{t=1}^{|V(G_n)|} U_t^2 \right).
\end{align} 
Note that the random variables $U_1,\dots, U_{|V(G_n)|}$ depend on the ordering of the vertices of $G_n$, which is arbitrary. A crucial ingredient in the proof is the following combinatorial lemma, which shows that there is a particular ordering of the vertices of $G_n$ which ensures $s(G_n)$ (recall definition in \eqref{eq:sGn}), a quantity which arises when expanding the terms $A$ and $B$ in \eqref{eq:AB}, can be bounded in terms of the counts of the pyramids $N(\triangle_s, G_n)$, for $1 \leq s \leq 4$. 
 
\begin{lemma}\label{l1} Let $s(G_n)$ be as defined in \eqref{eq:sGn}. Then there exists an ordering of the vertices $\{1,\dots, |V(G_n)|\}$ such that 
$$s(G_n) \lesssim (N(\triangle_1, G_n)+N(\triangle_2, G_n))^{\frac{3}{2}}(1+N(\triangle_4, G_n))^{\frac{1}{4}}.$$
\end{lemma}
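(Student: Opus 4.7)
The plan is to reduce $s(G_n)$ to a squared sum, use an ordering-invariant moment identity to introduce a max, and then control that max via a degeneracy/peeling argument. Set $f(t) := \sum_{s<t} d_{st}^2$. Expanding $(\sum_{s<t} d_{st}^2)^2$ and summing over $t$ yields $s(G_n) = \tfrac{1}{2}[\sum_t f(t)^2 - \sum_{e\in E(G_n)} d_e^4] \leq \tfrac{1}{2}\sum_t f(t)^2$. Next, a double count gives the ordering-invariant identity $\sum_t f(t) = \sum_e d_e^2 = 3N(K_3,G_n) + 2N(\triangle_2,G_n) \asymp M$, where $M := N(K_3,G_n) + N(\triangle_2,G_n)$. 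Therefore $\sum_t f(t)^2 \leq M \cdot \max_t f(t)$, so the whole problem reduces to choosing the ordering so that $\max_t f(t) \lesssim M^{1/2}(1+N(\triangle_4,G_n))^{1/4}$.

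For this I would combine two ingredients. The first is the elementary pointwise estimate $\max_e d_e \lesssim (1+N(\triangle_4,G_n))^{1/4}$ coming from $\binom{d_e}{4}\leq N(\triangle_4,G_n)$, which yields $\sum_{s<t} d_{st}^4 \leq (\max_e d_e^2)\, f(t) \lesssim (1+N(\triangle_4,G_n))^{1/2}\,f(t)$. The second is the Cauchy--Schwarz bound $f(t)^2 \leq R(t)\sum_{s<t} d_{st}^4$, where $R(t) := |\{s<t : d_{st}\geq 1\}|$ counts the back-edges at $t$ that lie in some triangle. The ordering is then chosen to make $\max_t R(t)$ small: peeling off vertices greedily by minimum degree in the subgraph $G_n^* \subseteq G_n$ of triangle-containing edges produces an ordering under which $\max_t R(t) \lesssim \sqrt{|E(G_n^*)|} \lesssim M^{1/2}$, since $|E(G_n^*)| \leq 3N(K_3,G_n)$.

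Naively combining these three estimates gives $\max_t f(t) \lesssim M^{1/2}(1+N(\triangle_4,G_n))^{1/2}$, and hence $s(G_n)\lesssim M^{3/2}(1+N(\triangle_4,G_n))^{1/2}$, which is off the target by a factor of $(1+N(\triangle_4,G_n))^{1/4}$. Recovering this missing factor is the main obstacle. I expect one must split $f(t) = \Delta_<(t) + 2\tau_<(t)$ with $\Delta_<(t) := \sum_{s<t} d_{st}$ and $\tau_<(t) := \sum_{s<t}\binom{d_{st}}{2}$, use the conservation laws $\sum_t\Delta_<(t) = 3N(K_3,G_n)$ and $\sum_t \tau_<(t) = N(\triangle_2,G_n)$, and bound the two pieces separately --- the triangle piece $\Delta_<(t)$ by the peeling argument alone, and the 2-pyramid piece $\tau_<(t)$ by a refined Cauchy--Schwarz invoking $\sum_e \binom{d_e}{2}^2 \lesssim N(\triangle_4,G_n) + N(\triangle_2,G_n)$. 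The key technical point is to verify that a single ordering simultaneously controls both pieces with the desired exponents.
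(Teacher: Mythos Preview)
Your reduction $s(G_n)\leq \tfrac12\sum_t f(t)^2\lesssim M\cdot\max_t f(t)$ is clean, and the degeneracy ordering on $G_n^*$ correctly gives $\max_t R(t)\lesssim \sqrt{|E(G_n^*)|}\lesssim M^{1/2}$. But the proof stops at a bound that is off by $(1+N(\triangle_4,G_n))^{1/4}$, and the sketch you give for recovering this factor does not close the gap. Concretely, the ``refined Cauchy--Schwarz'' you propose for the $2$-pyramid piece, namely $\tau_<(t)^2\le R(t)\sum_{s<t}\binom{d_{st}}{2}^2\lesssim M^{1/2}\bigl(N(\triangle_2,G_n)+N(\triangle_4,G_n)\bigr)$, is too weak already on the $n$-pyramid $\triangle_n$: there $M\asymp n^2$, $N(\triangle_4,\triangle_n)\asymp n^4$, so your bound gives $\tau_<(t)\lesssim n^{5/2}$, whereas the target $M^{1/2}(1+N(\triangle_4))^{1/4}\asymp n^{2}$ is genuinely smaller. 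More structurally, once you pass to $\max_t f(t)$ you have thrown away the interaction between different values of $t$, and any edgewise Cauchy--Schwarz at a single $t$ cannot see that the large values of $\binom{d_{st}}{2}$ are shared across many $t$'s.

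The paper's argument differs from yours in two essential ways. First, the ordering is not by degeneracy of $G_n^*$ but by the vertex score $d_v:=N(\triangle_{1,v},G_n)+N(\triangle_{2,v},G_n)$, taken non-increasing. Second --- and this is the decisive idea --- the outer sum is indexed by the \emph{smallest} label $s_1$ in the triple $s_1<s_2<s_3$, not the largest. For fixed $s_1$ one then has two upper bounds for $\sum_{s_2,s_3>s_1} d_{s_1s_3}^2 d_{s_2s_3}^2$: a coarse one, $d_{s_1}^2$, obtained by counting $\triangle_1$ and $\triangle_2$ through $s_1$ and then through $s_3$, the latter bounded by $d_{s_3}\le d_{s_1}$ thanks to the ordering; and a finer one, $\sum_{s_3>s_1} d_{s_3}\bigl(N(\triangle_{1,s_1s_3},G_n)+N(\triangle_{2,s_1s_3},G_n)\bigr)$. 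Taking the geometric mean of these two bounds and summing over $s_1$, together with $\max_e\bigl(d_e+\binom{d_e}{2}\bigr)\lesssim (1+N(\triangle_4,G_n))^{1/2}$ and $\sum_v d_v\lesssim M$, yields the stated exponent $1/4$ on $1+N(\triangle_4,G_n)$. The monotonicity $d_{s_3}\le d_{s_1}$ is exactly what your ``largest-index'' viewpoint cannot exploit; reorganizing your argument around the smallest index and this score ordering is what is missing.
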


\begin{proof} For  $1\leq s_1 < s_2 < s_3\leq |V(G_n)|$ fixed we  introduce the following symbols: 
\begin{itemize}

\item $\triangle_{1, s_1}$ will denote a triangle with one vertex being $s_1$ and $\triangle_{2, s_1}$ will denote a pair of triangles that share a common edge with $s_1$ as a vertex of the common edge.

\item $\triangle_{1, s_1 s_2}$ will denote a triangle with an edge $(s_1, s_2)$ and $\triangle_{2, s_1 s_2}$ will denote a pair of triangles with the common edge $(s_1, s_2)$.

\item $H_{1,s_1 s_2 s_3}$ will denote a graph with two different triangles with the edge $(s_1, s_3)$ and two different triangles with the edge $(s_2, s_3)$, $H_{2,s_1 s_2 s_3}$ will denote a graph with two (or respectively one) different triangle(s) with the edge $(s_1, s_3)$ and one (or respectively two) triangle(s) with the edge $(s_2, s_3)$, and $H_{3,s_1 s_2 s_3}$ will denote a graph with one triangle with the edge $(s_1, s_3)$ and one triangle with the edge $(s_2, s_3)$. 

\end{itemize}

For each vertex $s_1$, let $d_{s_1}= N(\triangle_{1, s_1}, G_n)+N(\triangle_{2, s_1}, G_n)$.\footnote{Here, $ N(\triangle_{1, s_1}, G_n)$ denotes the number of triangles in $G_n$ with one vertex in $s_1$. Similarly, $N(\triangle_{2, s_2}, G_n)$ counts the number of 2-pyramids $\triangle_2$ in $G_n$ with one vertex in $s_1$. The notations $N(\triangle_{1, s_1 s_2}, G_n)$, $N(\triangle_{2, s_1 s_2}, G_n)$, $N(H_{1, s_1 s_2  s_3}, G_n)$, $N(H_{2, s_1 s_2  s_3}, G_n)$, and $N(H_{3, s_1 s_2  s_3}, G_n)$ are defined similarly. }
We order the vertices of $G_n$ such that $d_1\geq \dots \geq d_{|V(G_n)|}$. Note that given $1 \leq s_1 < s_2 < s_3 \leq |V(G_n)|$, $d_{s_1 s_3}^2 d_{s_2 s_3}^2$ counts all possible combinations of two ordered triangles with $(s_1, s_3)$ as an edge and two ordered triangles with $(s_2, s_3)$ as an edge. These four triangles can form $H_{1, s_1  s_2 s_3}$, $H_{2, s_1  s_2 s_3}$ or $H_{3, s_1  s_2 s_3}$, each of which is counted only finitely many times in $s(G_n)$. Therefore,
\begin{align}
\sum_{1 \leq s_1 < s_2 < s_3 \leq |V(G_n)|} & d_{s_1 s_3}^2 d_{s_2 s_3}^2 \nonumber \\ 
& \lesssim \sum_{1 \leq s_1 < s_2 < s_3 \leq |V(G_n)|} \Big\{ N(H_{1,s_1 s_2 s_3}, G_n)+N(H_{2,s_1 s_2 s_3}, G_n)+N(H_{3,s_1 s_2 s_3}, G_n) \Big\}. \nonumber 
\end{align}
For $s_1 \in \{1, 2, \ldots, |V(G_n)|\}$ fixed, note that 
\begin{align}\label{eq:NG12_I}
\sum_{s_2, s_3:  s_1 < s_2 < s_3} &\Big\{N(H_{1,s_1 s_2 s_3}, G_n)+N(H_{2,s_1 s_2 s_3}, G_n) +N(H_{3,s_1 s_2 s_3}, G_n)\Big\} \nonumber \\
 \lesssim & \sum_{s_3:  s_1 < s_3} (N(\triangle_{1, s_1 s_3}, G_n)+N(\triangle_{2, s_1 s_3}, G_n)) d_{s_3}. 
\end{align} 
This is obtained by first fixing $s_3 > s_1$ and then choosing the copy of $\triangle_1$ or $\triangle_2$ on the edge $(s_1, s_3)$ in at most $N(\triangle_{1, s_1 s_3}, G_n)+N(\triangle_{2, s_1 s_3}, G_n)$ ways and then choosing the copy of $\triangle_1$ or $\triangle_2$ on the edge $(s_3, s_2)$,  for some $s_1 < s_2 < s_3$, in at most $d_{s_3}$ ways. Another way to bound the quantity on the LHS above is,  
\begin{align}\label{eq:NG12_II}
\sum_{s_2, s_3:  s_1 < s_2 < s_3} \Big\{ N(H_{1,s_1 s_2 s_3}, G_n)+N(H_{2,s_1 s_2 s_3}, G_n)+N(H_{3,s_1 s_2 s_3}, G_n) \Big\} \lesssim  d_{s_1}^2. 
\end{align} 
This is by the product rule of first counting the number $\triangle_{1,s_1}$ and $\triangle_{2,s_1}$ in $d_{s_1}$ ways and then counting the number of $\triangle_1$ and $\triangle_2$ passing through $s_3$ in at most $d_{s_3}\leq d_{s_1}$ ways by the ordering of the vertices. Therefore, combining \eqref{eq:NG12_I} and \eqref{eq:NG12_II} gives, 
\begin{align}\label{eq:G12}
\sum_{1 \leq s_1 < s_2 < s_3 \leq |V(G_n)|} & \left\{ N(H_{1,s_1 s_2 s_3}, G_n)+N(H_{2,s_1 s_2 s_3}, G_n) +N(H_{3,s_1 s_2 s_3}, G_n)\right\}  \nonumber \\ 
& \lesssim \sum_{s_1=1}^{|V(G_n)|} \left\{ \sqrt{d_{s_1}^2} \left(\sum_{s_3=1}^{|V(G_n)|} d_{s_3} (N(\triangle_{1, s_1 s_3}, G_n)+N(\triangle_{2, s_1 s_3}, G_n)) \right)^{\frac{1}{2}} \right\} \nonumber \\
& \lesssim \left(N(\triangle_1, G_n)+N(\triangle_2, G_n)\right)^{\frac{3}{2}} \left(1+N(\triangle_4, G_n) \right)^{\frac{1}{4}}, 
\end{align}
where the last step uses $\sum_{s_1} d_{s_1} \lesssim (N(\triangle_1, G_n)+N(\triangle_2, G_n))$ and $\max_{s_1, s_3}(N(\triangle_{1, s_1 s_3}, G_n)+N(\triangle_{2, s_1 s_3}, G_n)) \lesssim  (1+N(\triangle_4, G_n))^{1/2}$.
The lemma follows from \eq{eq:G12}.  
\end{proof}

We now proceed to bound the terms $A$ and $B$ as defined in \eqref{eq:AB}. 
We may use the properties (a) and (b) of the Hoeffding decomposition implicitly below. First, we bound $A$. To this end, note that
\begin{align}\label{eq:A}
A=\frac{1}{\sigma^4} \sum_{t=1}^{|V(G_n)|} \E\left (\sum_{s: s< t} Y_{s t} +  \sum_{s, s': s < s' < t} Y_{s s' t} \right)^4 \lesssim A_1+A_2,
\end{align}
where
\begin{align*} 
A_1:=\frac{1}{\sigma^4} \sum_{t=1}^{|V(G_n)|} \E\left(\sum_{s: s < t} Y_{s t}\right)^4 \quad \text{and} \quad 
A_2:=\frac{1}{\sigma^4} \sum_{t=1}^{|V(G_n)|} \E\left(\sum_{s, s': s < s' < t} Y_{s s' t}\right)^4.
\end{align*}
By ruling out all the zero-expectation terms, we have $A_1=A_{11}+2 A_{12}$, where
\begin{align}\label{eq:A11}
A_{11}:=\frac{1}{\sigma^4} \sum_{1 \leq s < t \leq |V(G_n)|}  \E (Y_{s t}^4) & \leq \frac{1}{\sigma^4} \sum_{1 \leq s < t \leq |V(G_n)|}  \frac{d_{s t}^4}{c^4} \E \left( \bm 1\{X_{s_1}=X_{s_2}\}-\frac{1}{c} \right)^4 \nonumber \\ 
& \lesssim_c \frac{1}{\sigma^4 } \sum_{1 \leq s < t \leq |V(G_n)|}  \left( d_{st} + {d_{s t} \choose 4} \right) \nonumber \\ 
& \lesssim \frac{N(\triangle_1, G_n)+N(\triangle_4, G_n)}{\sigma^4 }, 
\end{align} 
and
\begin{align}\label{eq:A12}
A_{12} & :=\frac{1}{\sigma^4} \sum_{1\leq s_1 < s_2 < s_3 \leq |V(G_n)|} \E (Y_{s_1 s_3}^2 Y_{s_2 s_3}^2) \nonumber \\ 
& \leq \frac{1}{\sigma^4} \sum_{1\leq s_1 < s_2 < s_3 \leq |V(G_n)|}  \frac{d_{s_1 s_3}^2 d_{s_2 s_3}^2}{c^4} \E \left[\left( \bm 1\{X_{s_1}=X_{s_3}\}-\frac{1}{c} \right)^2 \left( \bm 1\{X_{s_2}=X_{s_3}\}-\frac{1}{c} \right)^2 \right] \nonumber \\ 
& \lesssim_c \frac{s(G_n) }{\sigma^4 } .  
\end{align} 
Next, we consider $A_2$. It can be divided into three non-zero terms without any isolated vertex depending on the number of vertices involved being three or four or five. More specifically,
\begin{align}
A_2\lesssim & \frac{1}{\sigma^4} \sum_{t=1}^{|V(G_n)|} \sum_{s_1, s_2<t:\atop s_1\ne s_2} \E Y_{s_1 s_2 t}^4 \nonumber \\
&+\frac{1}{\sigma^4} \sum_{t=1}^{|V(G_n)|} \sum_{s_1, s_2, s_3<t:\atop |\{s_1, s_2, s_3\}|=3} \E\left( Y_{s_1s_2t}^2 Y_{s_1s_3t}^2+ Y_{s_1s_2t}^2 Y_{s_1s_3t}Y_{s_2s_3t} \right) \nonumber \\
&+ \frac{1}{\sigma^4} \sum_{t=1}^{|V(G_n)|} \sum_{s_1, s_2, s_3, s_4<t:\atop |\{s_1, s_2, s_3, s_4\}|=4} \E \left( Y_{s_1s_2t}^2 Y_{s_3 s_4t}^2+Y_{s_1s_2t} Y_{s_2s_3t} Y_{s_3s_4t} Y_{s_4s_1t}     \right)\label{eq:f002}
\end{align}
These terms are bounded by, up to a polynomial dependence on $\frac{1}{c}$, 
\begin{align}\label{eq:f003} 
O_c\left(\frac{N(\triangle_1, G_n)}{\sigma^4}  +\frac{N(\triangle_2, G_n) +N(H_{23}, G_n)}{\sigma^4} +\frac{ s(G_n) +N(H_{11}, G_n)}{\sigma^4} \right )
\end{align}
where $H_{23}$ and $H_{11}$ are subgraphs shown in Figure~\ref{fig2}.\footnote{For positive sequences $\{a_n\}_{n\geq 1}$ and $\{b_n\}_{n\geq 1}$, $a_n =O_{\square}(b_n)$ means $a_n \leq C  b_n$, for all $n$ large enough, where $C=C(\square) > 0$ is a constant depending on the subscripted parameters.} Note that
\begin{equation}\label{eq:f004}
N(H_{23}, G_n)+N(H_{11}, G_n)\lesssim b(G_n),
\end{equation}
and $N(\triangle_1, G_n)\lesssim s(G_n)$ and $N(\triangle_2, G_n)\lesssim b(G_n)$. Therefore,
\begin{align}\label{eq:A2}
A_2\lesssim_c \frac{ s(G_n) + b(G_n) }{\sigma^4} . 
\end{align} 
This implies, by \eqref{eq:A}, \eqref{eq:A11}, \eqref{eq:A12}, and \eqref{eq:A2}, 
\begin{align}\label{eq:A_bound}
A & \lesssim_c  \frac{N(\triangle_4, G_n) + s(G_n) + b(G_n) }{\sigma^4} . 
\end{align}

Now we turn to bounding $B$. We have
\begin{align}\label{eq:B_terms}
B=& \frac{1}{\sigma^4}\Var\left( \sum_{t=1}^{|V(G_n)|} \left(\sum_{s: s < t} Y_{s t}+ \sum_{s, s': s < s' < t} Y_{s s' t} \right)^2    \right)  \nonumber \\
& \lesssim B_1 + B_2 +  B_3, 
\end{align} 
where 
\begin{align}\label{eq:B123}
B_1 & = \frac{1}{\sigma^4}\Var\left(\sum_{t=1}^{|V(G_n)|} \left(\sum_{s: s < t} Y_{s  t} \right)^2 \right) \nonumber \\ 
B_2 & =\frac{1}{\sigma^4}\Var \left(\sum_{t=1}^{|V(G_n)|} \left(\sum_{s, s': s < s' < t} Y_{s s' t}\right)^2 \right) \nonumber \\  
B_3 & = \frac{1}{\sigma^4}\Var \left( \sum_{t=1}^{|V(G_n)|} \left(\sum_{s: s< t} Y_{s t} \right)\left(\sum_{s, s': s < s' < t} Y_{s s' t} \right) \right)
\end{align} 
We begin with $B_1$. Expanding the square gives, 
\begin{align}\label{eq:B1} 
B_1 & \lesssim \frac{1}{\sigma^4}\Var\left(\sum_{1 \leq s_1 < s_2 \leq |V(G_n)| }  Y_{s_1 s_2}^2 \right) + \frac{1}{\sigma^4} \Var \left(\sum_{1 \leq  s_1 < s_2 < s_3  \leq |V(G_n)|  } Y_{s_1 s_3} Y_{s_2 s_3}\right)   \nonumber \\  
& := B_{11} + B_{12}.
\end{align}
Note that, since $\Cov(Y_{s_1 s_2}^2,  Y_{s_1 s_3}^2)=0$, for $s_1 < s_2 < s_3$, 
\begin{align}\label{eq:B11}
B_{11}= \frac{1}{\sigma^4}\Var\left(\sum_{1 \leq s_1 < s_2 \leq |V(G_n)| }  Y_{s_1 s_2}^2 \right) & = \frac{1}{\sigma^4} \sum_{1 \leq s_1 < s_2 \leq |V(G_n)| } \Var( Y_{s_1 s_2}^2 )   \nonumber \\ 
& \leq \frac{1}{\sigma^4} \sum_{1 \leq s_1 < s_2 \leq |V(G_n)| } \E( Y_{s_1 s_2}^4 ) \nonumber \\ 
& \lesssim_c  \frac{1}{\sigma^4} \left( N(\triangle_1, G_n) + N(\triangle_4, G_n) \right), 
\end{align} 
by \eqref{eq:A11}. Similarly, by ruling out all the zero-expectation terms whenever there is a free index, we have
\begin{align}\label{eq:B12}
B_{12}=&\frac{1}{\sigma^4} \Var\left(\sum_{1 \leq s_1 < s_2 < s_3 \leq |V(G_n)|} Y_{s_1 s_3} Y_{s_2 s_3} \right) \nonumber \\
=& \frac{1}{\sigma^4} \E \left(\sum_{1 \leq s_1 < s_2 < s_3 \leq |V(G_n)|} Y_{s_1 s_3} Y_{s_2 s_3}\right)^2  \nonumber  \\
=& \frac{1}{\sigma^4} \E \left(\sum_{1 \leq s_1 < s_2 < s_3 \leq |V(G_n)|} Y_{s_1 s_3}^2 Y_{s_2 s_3}^2 \right) +\frac{2}{\sigma^4} \E \left(\sum_{s_1 < s_2 < s_3< s_4} Y_{s_1 s_3} Y_{s_2 s_3}Y_{s_1 s_4} Y_{s_2 s_4} \right)  \nonumber  \\ 
\lesssim_c & \frac{s(G_n) }{\sigma^4 } + \frac{1}{\sigma^4 }\sum_{1\leq s_1 < s_2 < s_3<s_4 \leq |V(G_n)|} d_{s_1 s_3} d_{s_2 s_3} d_{s_1 s_4} d_{s_2 s_4}  \nonumber  \\ 
= & \frac{ s(G_n)  +  b(G_n) }{\sigma^4}. 
\end{align}
Hence, using \eqref{eq:B11} and \eqref{eq:B12} in \eqref{eq:B1} gives (recall the bound $N(\triangle_1, G_n)\lesssim s(G_n)$), 
\begin{align}\label{eq:B1_bound} 
B_1 & \lesssim_c  \frac{ N(\triangle_4, G_n) + s(G_n) + b(G_n) }{\sigma^4} .  
\end{align}

Now, we bound $B_2$. Recalling the definition of $B_2$ from \eqref{eq:B123} gives, 
\begin{align}\label{eq:B2123}
B_2 
=& \frac{1}{\sigma^4}\Var \bigg(\sum_{1 \leq s_1 < s_2 < s_3 \leq |V(G_n)|} Y_{s_1 s_2 s_3}^2 + \sum_{s_3=1}^{|V(G_n)|} \sum_{s_1, s_2 : s_1 < s_2 < s_3} \sum_{s_4: s_4 < s_3 \atop s_4\notin \{s_1, s_2\}} Y_{s_1 s_2 s_3} (Y_{s_1 s_4 s_3}+Y_{s_4 s_2 s_3}) \nonumber \\
& \qquad \qquad + \sum_{s_3=1}^{|V(G_n)|} \sum_{s_1, s_2 : s_1 < s_2 < s_3} \sum_{s_4, s_5: s_4<s_5<s_3 \atop |\{s_1, s_2, s_4, s_5\}|=4}  Y_{s_1 s_2 s_3} Y_{s_4 s_5 s_3}  \bigg) \nonumber \\ 
\lesssim & B_{21} + B_{22} + B_{23},
\end{align}
where $B_{21}$, $B_{22}$, and $B_{23}$ are the respective variances of the 3 terms above.  Note that, since the covariances are non-zero only when there are at least two common vertices, 
\begin{align}\label{eq:B21}
B_{21}:=\frac{1}{\sigma^4}\Var\left( \sum_{1 \leq s_1 < s_2 < s_3 \leq |V(G_n)|} Y_{s_1 s_2 s_3}^2  \right) \lesssim_c \frac{ N(\triangle_1, G_n) + N(\triangle_2, G_n) }{\sigma^4} . 
\end{align} 
Next, for $B_{22}$ the covariances of the summands can be divided into three terms depending on the number of common vertices being two, three, or four. 
Following similar calculations as in \eq{eq:f002}--\eq{eq:f004}, these terms are bounded by, up to a polynomial dependence on $\frac{1}{c}$
$$B_{22} \lesssim_c \frac{1}{\sigma^4}\left( N(H_{25}, G_n)+N(H_{11}, G_n)+N(H_{23}, G_n) +N(\triangle_2, G_n) \right)\lesssim \frac{b(G_n)}{\sigma^4},$$
where subgraphs $H_{25}$, $H_{11}$ and $H_{23}$ are as shown in Figure~\ref{fig2}. Similarly, for $B_{23}$ the covariances of the summands can be divided into two terms depending on the number of common vertices being four or five and 
$$B_{23} \lesssim_c  \frac{ s(G_n) + b(G_n) }{ \sigma^4 }. $$ Now, recalling \eqref{eq:B2123}, $N(\triangle_1, G_n)\lesssim s(G_n)$ and $N(\triangle_2, G_n)\lesssim b(G_n)$, and combining the bounds for $B_{21}$, $B_{22}$, and $B_{23}$, gives   
\begin{align}\label{eq:B2_bound} 
B_2 & \lesssim_c  \frac{s(G_n) + b(G_n)}{\sigma^4} .  
\end{align} 

Finally, we bound $B_{3}$. Recalling the definition of $B_3$ from \eqref{eq:B123} gives, 
\begin{align}\label{eq:B3}
B_3 & = \frac{1}{\sigma^4}\Var\left( \sum_{1 \leq s_1 < s_2 < s_3 \leq |V(G_n)| } (Y_{s_1 s_3}+Y_{s_2 s_3}) Y_{s_1s_2 s_3}+\sum_{1 \leq s_1 < s_2 < s_3 \leq |V(G_n)| }\sum_{s_4: s_4<s_3 \atop s_4 \notin \{s_1, s_2\}} Y_{s_4 s_3} Y_{s_1s_2 s_3}  \right) \nonumber \\ 
& \lesssim B_{31} + B_{31}' + B_{32} + B_{32}' + B_{32}'', 
\end{align}
where 
\begin{align}\label{eq:B3_terms}
B_{31} & = \frac{1}{\sigma^4}\Var \left( \sum_{1 \leq s_1 < s_2 < s_3 \leq |V(G_n)| } Y_{s_1 s_3} Y_{s_1 s_2 s_3} \right), \nonumber \\ 
B_{31}' & =\frac{1}{\sigma^4}\Var \left( \sum_{1 \leq 1 \leq s_1 < s_2 < s_3 \leq |V(G_n)| } Y_{s_2 s_3} Y_{s_1 s_2 s_3}  \right), \nonumber \\
B_{32} & =\frac{1}{\sigma^4}\Var \left( \sum_{  1 \leq s_1 < s_2 < s_3 \leq |V(G_n)|  }\sum_{s_4:  s_4< s_1} Y_{s_4 s_3} Y_{s_1 s_2 s_3} \right), \nonumber \\ 
B_{32}' & =\frac{1}{\sigma^4}\Var \left( \sum_{  1 \leq s_1 < s_2 < s_3 \leq |V(G_n)| }\sum_{s_4: s_1 < s_4 < s_2} Y_{s_4 s_3} Y_{s_1 s_2 s_3} \right), \nonumber \\ 
B_{32}'' & =\frac{1}{\sigma^4}\Var \left( \sum_{ 1 \leq s_1 < s_2 < s_3 \leq |V(G_n)| }\sum_{s_4: s_2 < s_4 < s_3} Y_{s_4 s_3} Y_{s_1 s_2 s_3} \right). 
\end{align}
We begin with $B_{31}$. In this the covariances of the summands can be divided into two terms depending on the number of common vertices being two or three.
Following similar calculations as in \eq{eq:f002}--\eq{eq:f004}, these terms are bounded respectively by $O_c(b(G_n))$ and $O_c(N(\triangle_1, G_n)+N(\triangle_2, G_n)+N(\triangle_3, G_n))$.
Therefore,
$$B_{31}\lesssim_c \frac{N(\triangle_1, G_n)+N(\triangle_4, G_n)+ b(G_n)}{\sigma^4 }.$$ 
Similarly, 
$$B_{31}'\lesssim_c \frac{N(\triangle_1, G_n)+ N(\triangle_4, G_n)+ b(G_n)}{\sigma^4 }  .$$ 
Finally, in $B_{32}$ the covariances of the summands can be divided into two terms depending on the number of common vertices being three or four. Again, following similar calculations as in \eq{eq:f002}--\eq{eq:f004}, these terms are bounded respectively by $O_c (b(G_n))$ and $O_c (s(G_n))$. Therefore, $$B_{32} \lesssim_c \frac{s(G_n) + b(G_n)}{\sigma^4}. $$ 
Similarly, $\max\{B_{32}', B_{32}'' \} \lesssim_c  \frac{1}{\sigma^4}(s(G_n) + b(G_n))$.  
Hence, combining the estimates above with \eqref{eq:B3} and \eqref{eq:B3_terms} gives, 
\begin{align}\label{eq:B3_bound} 
B_3 & \lesssim_c  \frac{N(\triangle_1, G_n) + N(\triangle_4, G_n) + s(G_n) + b(G_n)}{\sigma^4}.  
\end{align} 

Finally, recalling \eqref{eq:B_terms}, and combining the bounds in \eqref{eq:B1_bound}, \eqref{eq:B2_bound}, \eqref{eq:B3_bound} gives (recall $N(\triangle_1, G_n)\lesssim s(G_n)$), 
\begin{align*}
B & \lesssim_c  \frac{N(\triangle_4, G_n) + s(G_n) + b(G_n) }{\sigma^4} .  
\end{align*}
Moreover, by \eqref{eq:A_bound} and using $\sigma^4\asymp_c  \left( N(\triangle_1, G_n) + N(\triangle_2, G_n) \right)^2$, 
\begin{align}\label{eq:Z3bound_I} 
A+B & \lesssim_c  \frac{ N(\triangle_4, G_n) + s(G_n) + b(G_n)}{\sigma^4}    \lesssim_c  \frac{ N(\triangle_4, G_n) + s(G_n)+b(G_n)}{ \left( N(\triangle_1, G_n)  + N(\triangle_2, G_n) \right)^2  } . 
\end{align}
Bounding $s(G_n)$ by Lemma~\ref{l1} gives, 
\begin{align}\label{eq:Z3bound_II}
\frac{ s(G_n) }{  \left(  N(\triangle_1, G_n) + N(\triangle_2, G_n)  \right)^2 } & \lesssim \left( \frac{ 1 +  N(\triangle_4, G_n) }{ \left(  N(\triangle_1, G_n) + N(\triangle_2, G_n)  \right)^2} \right)^{\frac{1}{4}}. 
\end{align} 
The result in \eq{12} now follows by using \eqref{eq:Z3bound_I} and \eqref{eq:Z3bound_II} in  \eq{1}. \hfill $\Box$ \\

Having completed the proof of Theorem \ref{T1}, we now construct two examples which show the necessity of the error terms $R_1$ and $R_2$ (recall definitions from \eqref{eq:R12}) in \eqref{12}, that is, if these error  terms are non-vanishing then a CLT for $Z_3(G_n)$ might not hold.

\begin{example}\label{example1}
Consider the $n$-pyramid $\triangle_{n}$ with vertex set $V(\triangle_n)=\{1, 2, u_1, u_2, \ldots, u_n\}$, where the vertices $\{1,2,u_s\}$ form triangles, for $1\leq s\leq n$.
Note that this graph has $|V(\triangle_n)|=n+2$ vertices, $|E(\triangle_n)|=2n+1$ edges, and $N(K_3, \triangle_n)=n$ triangles.
Moreover, note that $N(\triangle_2, \triangle_n)={n \choose 2}\asymp n^2$, $N(\triangle_4, \triangle_n)={n \choose 4}\asymp n^4$, and $b(\triangle_n)={n \choose 2}\asymp n^2$.
Therefore, recalling \eqref{eq:R12},
$$R_1 = \frac{1+N(\triangle_4, \triangle_n)}{( N(K_3, \triangle_n) + N(\triangle_2, \triangle_n))^2}  \asymp 1 \quad \text{ and } \quad R_2 = \frac{b(\triangle_n)}{(  N(K_3, \triangle_n) + N(\triangle_2, \triangle_n))^2} \asymp \frac{1}{n^2}\to 0,$$
that is, the error term in Theorem \ref{T1} does not vanish, since $R_1$ does not go to zero. In fact, in this case, $Z_3(\triangle_n)$ has a non-normal limit, as argued below. Note that with probability $\frac{1}{c}$, the vertices 1 and 2 have the same color. In this case, $T_3(\triangle_n)\sim \mathrm{Bin}(n, \frac{1}{c})$ and
\begin{align}\label{eq:example1_I}
\frac{T_3(\triangle_n)-\frac{n}{c^2}}{n}\stackrel{P} \rightarrow \frac{1}{c} \left(1-\frac{1}{c} \right).
\end{align}
On the other hand, with probability $1-\frac{1}{c}$, the vertices 1 and 2 have different colors. In this case, $T_3(\triangle_n)=0$ and
\begin{align}\label{eq:example1_II}
\frac{T_3(\triangle_n)-\frac{n}{c^2}}{n}\stackrel{P} \rightarrow -\frac{1}{c^2}.
\end{align}
Therefore, combining \eqref{eq:example1_I} and \eqref{eq:example1_II}, 
\begin{equation}\label{eq:f001}
\frac{T_3(\triangle_n)-\frac{n}{c^2}}{n}\stackrel{D} \rightarrow \frac{1}{c}\delta_{ \frac{1}{c} \left(1-\frac{1}{c} \right)} + \left(1-\frac{1}{c} \right) \delta_{-\frac{1}{c^2}},
\end{equation}
which is a 2-point discrete distribution. 
\end{example}

Note that in the example above the term $R_1$ is non-vanishing. We now construct a sequence of graphs which has a non-normal limiting distribution for which the term $R_2$ in \eqref{eq:R12} is non-vanishing.

\begin{figure}[h]
\centering
\begin{minipage}[c]{1.0\textwidth}
\centering
\includegraphics[width=1.85in]
    {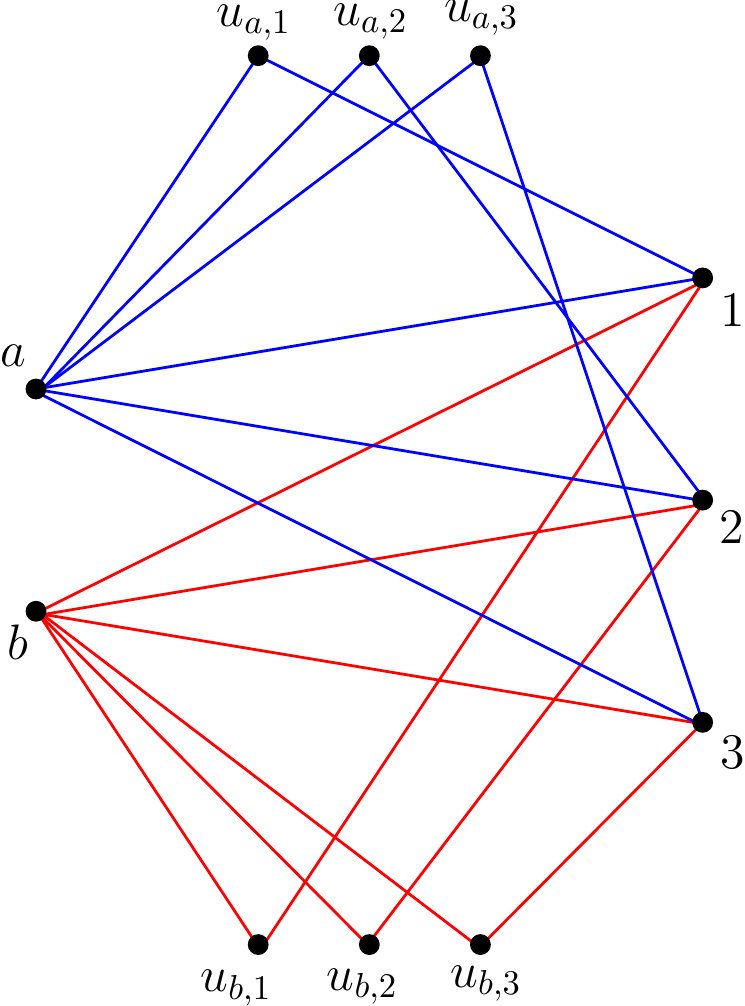}\\
\end{minipage}
\caption{\small{The graph $\cB_3^\triangle$ as defined in Example \ref{example2}.}}
\label{fig:example_moment_II}
\end{figure}

\begin{example}\label{example2}
Consider the graph $G_n$ with vertex set $V(G_n)=V_1 \bigcup V_2 \bigcup V_3 \bigcup V_4$, where 
$$V_1=\{a, b\}, ~ V_2=\{1, 2,  \ldots n\}, ~ V_3=\{u_{a, 1}, u_{a, 2}, \ldots, u_{a, n}\}, \text{ and }  V_4=\{u_{b, 1}, u_{b, 2}, \ldots, u_{b, n}\},$$ 
where the vertices $\{a, s, u_{a, s}\}$ form a triangle and the vertices  $\{b, s, u_{b, s}\}$ form a triangle, for every $1 \leq s \leq n$.  We denote this graph by $\cB_n^\triangle$. (The graph  $\cB_3^\triangle$ is shown in Figure \ref{fig:example_moment_II}(b).) Note that this graph has $|V(\cB_n^\triangle)|=3n+2$ vertices, $|E(\cB_n^\triangle)|=6n$ edges, and $N(K_3, \cB_n^\triangle)=2n$ triangles (one on every edge between $V_1$ and $V_2$). Moreover, note that $N(\triangle_2, \cB_n^\triangle)=0$, hence, $N(\triangle_4, G_n)=0$. Therefore, $R_1 \asymp 1/n \rightarrow 0$.
On other hand, it is easy to check that $b(G_n) \asymp n^2$, since,  
for $1 \leq s < t \leq n$, $d_{as} = d_{b s}=d_{bt}=d_{at}=1$. Therefore, 
the second term in \eqref{12} is
$$R_2=\frac{b(G_n)}{ ( N(K_3, G_n)+N(\triangle_2, G_n))^2} \asymp 1,$$
Note that this does not go  to zero. 
In fact, $Z_3(\cB_n^\triangle)$ has a non-normal limit,  which we directly derive below. 

Let $A^+$ be the event that the vertices $a$ and $b$ have the same color, and $A^-$ the event that $a$ and $b$ have different colors. Denote the number of monochromatic edges in the complete bipartite formed between $V_1$ and $V_2$ as $T_{12}$.  On the event $A^+$,  $T_{12} \sim 2 \mathrm{Bin}(n, 1/c)$. Therefore, $$T_3(\cB_n^\triangle) \sim \mathrm{Bin}(T_{12}, 1/c).$$ Then, using 
$$\frac{T_3(\cB_n^\triangle) - \frac{T_{12}}{c} }{\sqrt{ T_{12} }} \Big|T_{12} \stackrel{D}\rightarrow N\left(0,  \frac{1}{c} \left( 1 - \frac{1}{c} \right)\right), \quad \frac{ \frac{T_{12}}{c} - \frac{2n}{c^2} }{ \sqrt{n} } \stackrel{D}\rightarrow N\left(0,  \frac{4}{c^3} \left( 1 - \frac{1}{c} \right)\right),$$
and $T_{12}/n  \stackrel{P}\rightarrow 2/c$, it follows that, on the event $A^+$, 
\begin{align}\label{eq:graph_B_I}
\frac{T_3(\cB_n^\triangle) - \frac{2n}{c^2} }{\sqrt{ n }} \stackrel{D}\rightarrow N\left(0,    \left(\frac{4}{c^3} + \frac{2}{c^2} \right) \left( 1 - \frac{1}{c} \right) \right).
\end{align}

Next, consider the event $A^-$. Assume, without also generality, the vertex $a$ is colored with color $1$ and the vertex $b$ is colored with color 2. Then $T_{12} \sim N_1+N_2$, where $(N_1, N_2, \ldots, N_c) \sim \mathrm{Multi}(n, 1/c, 1/c, \ldots, 1/c)$, where $N_s$ denotes the number of vertices of color $s$ in the set $V_2$. As before, $T_3(\cB_n^\triangle) \sim \mathrm{Bin}(T_{12}, 1/c)$. Now, note that, 
$$\frac{T_3(\cB_n^\triangle) - \frac{T_{12}}{c}}{\sqrt{T_{12}}}  \Big|T_{12}   \stackrel{D}\rightarrow N\left(0,  \frac{1}{c} \left( 1 - \frac{1}{c} \right)\right), \quad \frac{\frac{T_{12}}{c} - \frac{2n}{c^2}}{\sqrt{n}}  \stackrel{D}\rightarrow N\left(0,  \frac{2}{c^3} \left( 1 - \frac{2}{c} \right)\right),$$
and $T_{12}/n \stackrel{P}\rightarrow  2/c$, since $N_1+N_2 \sim \mathrm{Bin}(n, 2/c)$. Therefore, on the event $A^-$, 
\begin{align}\label{eq:graph_B_II}
\frac{T_3(\cB_n^\triangle) - \frac{2n}{c^2} }{\sqrt{ n }} \stackrel{D}\rightarrow N\left(0,    \frac{2}{c^2} \left( 1 - \frac{2}{c^2} \right) \right). 
\end{align} 
Combining \eqref{eq:graph_B_I} and \eqref{eq:graph_B_II}, and noting that $\P(A^+)=1/c$ and $\P(A^-)=1-1/c$, we get 
\begin{align}\label{eq:limit_B}
& \frac{T_3(\cB_n^\triangle) - \frac{2n}{c^2} }{\sqrt{ n }} \nonumber \\ 
& \quad \stackrel{D}\rightarrow \frac{1}{c} \cdot N\left(0,      \left(\frac{4}{c^3} + \frac{2}{c^2} \right) \left( 1 - \frac{1}{c} \right) \right) + \left(1-\frac{1}{c} \right) \cdot N\left(0,     \frac{2}{c^2} \left( 1 - \frac{2}{c^2} \right) \right), 
\end{align}
which is a mixture of two normals. 

\end{example}

\section{Proof of Theorem \ref{T2}}\label{sec4}

In this section we prove Theorem \ref{T2}. The proof of the fourth moment bound in \eqref{13} is given in Section \ref{sec:th4thmomentpf_I} below. The counterexample to the fourth-moment phenomenon for $2 \leq c \leq 4$ is described in Section \ref{sec:th4thmomentpf_II}.

\subsection{Proof of the Fourth Moment Bound for $c\geq 5$}
\label{sec:th4thmomentpf_I}

The main step in the proof of fourth moment error bound in \eqref{13} is the computation of $\E (Z_3(G_n)^4) - 3$. 

\begin{lemma}\label{l2} For any graph sequence $\{G_n\}_{n \geq 1}$ the following hold: 
\begin{itemize} 
\item[(a)] For any $c \geq 2$, 
\ben{\label{14}
\E (Z_3(G_n)^4)-3=\frac{1}{\sigma^4}\left\{\sum_{s=1}^4 \delta_s N(\triangle_s, G_n)+\sum_{s=1}^{28} h_s N(H_s, G_n)   \right\},
}
where the graphs $\triangle_1, \triangle_2, \triangle_3, \triangle_4$, $H_1, H_2, \ldots, H_{28}$, along with the specified triangles, are listed in Figure~\ref{fig2}, and the coefficients $\delta_1, \delta_2, \delta_3, \delta_4$, $h_1, h_2, \ldots, h_{28}$ are given in Table~\ref{table:coefficient_list}.\footnote{Throughout this section, we will use $N(H_s, G_n)$ to count the number of copies of $H_s$ with the specified triangles in $G_n$, for $1 \leq s \leq 28$ (see the caption of Figure~\ref{fig2} for illustration).}

\item[(b)] If $c\geq 5$, all the coefficients $\delta_1, \delta_2, \delta_3, \delta_4$, $h_1, h_2, \ldots, h_{28}$ are positive.
\end{itemize} 
\end{lemma}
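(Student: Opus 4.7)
The plan is to expand $\E(Z_3(G_n)^4) - 3$ as a sum of fourth joint cumulants over ordered $4$-tuples of triangles in $G_n$, and then enumerate the finitely many vertex-overlap patterns that contribute nontrivially. For each triangle $t = \{s_1, s_2, s_3\}$ of $G_n$, set $I_t := \mathbf{1}\{X_{s_1} = X_{s_2} = X_{s_3}\}$, so that $T_3(G_n) - \E T_3(G_n) = \sum_t (I_t - c^{-2})$. Since $\E Z_3(G_n)^2 = 1$, standard cumulant algebra gives
\[
\E(Z_3(G_n)^4) - 3 = \frac{1}{\sigma^4} \sum_{(t_1, t_2, t_3, t_4)} \kappa_4(I_{t_1}, I_{t_2}, I_{t_3}, I_{t_4}),
\]
where the sum is over ordered $4$-tuples of triangles of $G_n$ and $\kappa_4$ denotes the fourth joint cumulant.

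A standard cumulant property ensures that $\kappa_4(I_{t_1}, I_{t_2}, I_{t_3}, I_{t_4})$ vanishes whenever $\{t_1, t_2, t_3, t_4\}$ splits into two nonempty subgroups with disjoint vertex sets, since then the two groups of indicators are independent. Consequently, only those $4$-tuples whose ``triangle-share-a-vertex'' hypergraph is connected contribute to the sum. The next step is to enumerate all such isomorphism types of $4$-tuples of triangles (tracking which triangles in the host subgraph are the ``designated'' ones, since a single host graph can arise from several inequivalent assignments), and verify that this enumeration is exactly $\triangle_1, \ldots, \triangle_4, H_1, \ldots, H_{28}$ as depicted in Figure~\ref{fig2}.

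For each isomorphism type, I would compute $\kappa_4$ via the defining identity
\[
\kappa_4(I_{t_1}, I_{t_2}, I_{t_3}, I_{t_4}) = \E \prod_{i=1}^4 (I_{t_i} - c^{-2}) - \sum_{\pi} \Cov(I_{t_{\pi_1}}, I_{t_{\pi_2}}) \Cov(I_{t_{\pi_3}}, I_{t_{\pi_4}}),
\]
where the second sum is over the three pairings $\pi$ of $\{1,2,3,4\}$. The joint expectation $\E \prod_{i \in S} I_{t_i}$, for any $S \subseteq \{1,2,3,4\}$, factorizes across the connected components of the corresponding triangle-sub-hypergraph: a component covering $v$ distinct vertices contributes a factor $c^{-(v-1)}$, because all of its vertices must share a single color. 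Expanding $\E\prod(I_{t_i}-c^{-2})$ by inclusion-exclusion over subsets of indices and plugging in these explicit probabilities yields $\kappa_4$ as a concrete polynomial in $c^{-1}$ for each isomorphism type. Multiplying by the appropriate combinatorial factor (accounting for the number of ordered $4$-tuples of triangles generating a given host graph) and grouping copies produces \eqref{14}, from which the $\delta_s$ and $h_s$ in Table \ref{table:coefficient_list} are read off.

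For part (b), each $\delta_s$ and $h_s$ produced in part (a) is an explicit rational expression in $c$, and I would verify, coefficient by coefficient, that each is strictly positive for every integer $c \geq 5$ (clearing denominators reduces this to checking finitely many polynomial inequalities, each of which can be certified by evaluating at $c = 5$ together with an elementary monotonicity argument). The main obstacle is purely combinatorial bookkeeping: the $32$ isomorphism types must be enumerated with no omissions or duplicates, and the automorphism factors converting the cumulant into a weighted count by $N(\triangle_s, G_n)$ and $N(H_s, G_n)$ must be tracked carefully. A useful sanity check is that at least one of the coefficients must change sign between $c=4$ and $c=5$, which is exactly what powers the non-Gaussian examples constructed in Section \ref{sec:th4thmomentpf_II}.
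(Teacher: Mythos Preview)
Your approach is essentially the same as the paper's, just organized through the language of joint cumulants rather than a direct expansion of $\E[\sum_t(I_t-c^{-2})]^4-3\sigma^4$. Since summing the three pairing terms in your $\kappa_4$ formula over ordered $4$-tuples reproduces exactly $3\sigma^4$, the two computations are term-by-term identical; the cumulant phrasing simply gives a tidy structural reason (independence $\Rightarrow$ vanishing cumulant) for why only connected configurations survive, whereas the paper leaves that implicit in the bookkeeping. The paper, like your proposal, carries out only one representative coefficient ($\delta_4$) in detail and states that the remaining $31$ coefficients and part~(b) follow by the same tedious but straightforward case-by-case verification.
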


\begin{proof} Note that 
$$\E (Z_3(G_n)^4) - 3=\frac{1}{\sigma^4}\left\{ \E \left[ \sum_{s_1<s_2< s_3} a_{s_1 s_2 s_3}(G_n) \bm 1\left\{X_{s_1}=X_{s_2}=X_{s_3})-\frac{1}{c^2} \right\}  \right]^4-3\sigma^4   \right\}, $$ 
where $a_{s_1 s_2 s_3}(G_n) := a_{s_1s_2}(G_n) a_{s_2s_3}(G_n) a_{s_1s_3}(G_n)$. Expanding out the fourth powers above gives a sum over graphs formed by the union of 4 triangles. The idea of the proof is to group terms corresponding to graphs with specified triangles (see the caption of Figure~\ref{fig2} for illustration) and keep track of the corresponding coefficients. The graphs which show up and the corresponding coefficients are listed in Table \ref{table:coefficient_list}. Here, we only show the computation of the coefficient $\delta_4$ which corresponds to the graph $\triangle_4$. The other coefficients are obtained by similar tedious but straightforward computations. The details are omitted.

Suppose in the expansion of 
\begin{align*}
\E\left[ \sum_{1 \leq s_1 < s_2 < s_3 \leq |V(G_n)|} a_{s_1 s_2 s_3}(G_n) \left(\bm 1\{X_{s_1}=X_{s_2}=X_{s_3})-\frac{1}{c^2} \right)\right]^4,
\end{align*} 
the four triangles which form a specified $\triangle_4$ in the graph $G_n$,  have $(1,2)$ as the the common edge of the four triangles and $3,4,5,6$ as the other four vertices of $\triangle_4$. Then, denoting $X_{s_1s_2 s_3}= \{X_{s_1}= X_{s_2} = X_{s_3} \}$, 
\begin{align*}
&\E\left( \bm 1 \{X_{123} \} -\frac{1}{c^2} \right) \left( \bm 1\{ X_{124} \} -\frac{1}{c^2} \right) \left( \bm 1\{X_{125} \} -\frac{1}{c^2} \right) \left( \bm 1\{ X_{126} \} -\frac{1}{c^2} \right)\\
& ~~~~~ = \P(X_1=X_2=X_3=X_4=X_5=X_6)-\frac{4}{c^2}\P(X_1=X_2=X_3=X_4=X_5) \\ 
& ~~~~~~~~~~~~~ +\frac{6}{c^4}\P(X_1=X_2=X_3=X_4) -\frac{4}{c^6} \P(X_1=X_2=X_3)+\frac{1}{c^8}\\
& ~~~~~ = \frac{1}{c^5}-\frac{4}{c^6}+\frac{6}{c^7}-\frac{3}{c^8}.
\end{align*} 
Since there $4!=24$ ways to order the four triangles, the contribution of $\E Z_3(G_n)^4$ to the specified $\triangle_4$ is
\ben{\label{16}
24\left(\frac{1}{c^5}-\frac{4}{c^6}+\frac{6}{c^7}-\frac{3}{c^8}\right).
}
Next, write $N(\triangle_1, G_n)=\sum_{\triangle_1 \sqsubseteq G_n} 1 $ and $N(\triangle_2, G_n)=\sum_{\triangle_2 \sqsubseteq G_n} 1$, where the sum is over distinct subgraphs of $G_n$ isomorphic to $\triangle_1$ and $\triangle_2$, respectively.\footnote{For two graphs $G$ and $H$, $H \sqsubseteq G$ means $H$ is a subgraph of $G$.} Then expanding
$$3\sigma^4=3 \left[   \frac{1}{c^2} \left(1-\frac{1}{c^2} \right) N(\triangle_1, G_n)+ 2\left(\frac{1}{c^3}-\frac{1}{c^4} \right) N(\triangle_2, G_n)  \right]^2,$$ 
we get a sum over graphs obtained by the union of two copies of $\triangle_1$, or two copies of $\triangle_2$, or one copy of $\triangle_1$ and one copy of $\triangle_2$.  Note that only possible way to get a $\triangle_4$ is to have the union of two copies of $\triangle_2$ joined at the base. Since there are ${4 \choose 2}=6$ ways to choosing the two non-base vertices of $\triangle_2$ from the vertices $3, 4, 5, 6$,  the contribution of $3\sigma^4$ to a specified $\triangle_4$ is
\ben{\label{17}
3\times {4\choose 2} \times \left[2\left(\frac{1}{c^3}-\frac{1}{c^4}\right)  \right]^2=24\left(\frac{3}{c^6}-\frac{6}{c^7}+\frac{3}{c^8} \right).
}
By combining \eq{16} and \eq{17}, we obtain  $\delta_4=24\left(\frac{1}{c^5}-\frac{7}{c^6}+\frac{12}{c^7}-\frac{6}{c^8}\right)$. 

The conclusion in part (b) can be verified directly from the expressions in Table \ref{table:coefficient_list}. 
\end{proof}

Using the lemma above we now complete the proof of the first part of Theorem \ref{T2}. 
Note that, for $c\geq 5$, by Lemma~\ref{l2}(b), 
\ben{\label{18}
\E (Z_3(G_n)^4)-3 \gtrsim_c \frac{1}{\sigma^4} \left( \sum_{s=1}^4  N(\triangle_s, G_n) +\sum_{s=1}^{28} N(H_s,G_n) \right) .} 
Now, it is straightforward to check that $b(G_n)  \lesssim \sum_{s=1}^4   N(\triangle_s, G_n)+\sum_{s=1}^{28}   N(H_s,G_n).$ Therefore, \eq{13} follows from \eq{12} and \eq{18}. This completes the proof of the first part of Theorem \ref{T2}.

\subsection{Counterexample to the Fourth Moment Phenomenon for $2 \leq c \leq 4$}
\label{sec:th4thmomentpf_II}

Here, we prove the second part of Theorem \ref{T2}. Suppose  $2 \leq c \leq 4$, and consider $G_n$ to be the disjoint union of the $n$-pyramid $\triangle_{n}$ and $\cB_{n'}^\Delta$ (as defined in Example \ref{example2}), where $n' \asymp n^2$ (will be specified later). Then, noting that $\sigma^4 \asymp N(K_3, G_n)^2 + N(\triangle_2, G_n)^2 \asymp n^4 $, and using Lemma \ref{l2}(a)  gives,  
$$
\E (Z_3(G_n)^4)-3=\frac{1}{\sigma^4}\left\{\sum_{s=1}^4 \delta_s { n \choose s}+2\delta_1 n' +  h_{16}     {n' \choose 2} \right\} = \frac{1}{\sigma^4}\left\{  \delta_4 { n \choose 4}  +  h_{16} {n' \choose 2}  \right\} + o(1) ,$$
since $N(H_{16}, G_n) = {n' \choose 2}$, where the graph $H_{16}$ is given in the appendix in Figure \ref{fig2}. Now, note from Table \ref{table:coefficient_list} that $h_{16}>0$, for all $c \geq 2$, but $\delta_4 < 0$, for $2 \leq c \leq 4$. Therefore, choosing $n'=\lceil \sqrt{\frac{2|\delta_4|}{h_{16}} { n \choose 4}} \rceil  > 0$ ensures that $\E Z_3(G_n)^4 \rightarrow 3$. 

Next, 
since $\E(T_3(G_n))= \E (T_3(\triangle_n)) + \E (T_3(\cB_{n'}^\Delta)) = \frac{n}{c^2} + \frac{2n'}{c^2} $, 
\begin{align}
& \frac{T_3(G_n) - \E (T_3(G_n))}{\sqrt{n'}} = \frac{T_3(\triangle_n)  - \frac{n}{c^2}}{\sqrt{n'}} + \frac{ T_3(\cB_{n'}^\Delta, G_n)    - \frac{2 n'}{c^2}}{\sqrt{n'}}  \stackrel{D} \rightarrow \frac{n}{\sqrt{n'}}I+ J, \nonumber 
\end{align}
where $I$ and $J$ are independent, $I$ is as in \eq{eq:f001}, and $J$ is the mixture of 2 normals as in \eqref{eq:limit_B}. This shows that $Z_3(G_n)$ has a non-normal limit, completing the proof of the second part of Theorem \ref{T2}.

\section{Proof of Theorem \ref{THM:4MOMENT}}
\label{sec:4momentpf}

Recall the definition $T_3(G_n)$ from \eqref{eq:T3Gn}. We rewrite $T_3(G_n)$ as follows: 
$$T_3(G_n):=\frac{1}{6} \sum_{\bm s \in V(G_n)_{3} } a_{s_1 s_2}(G_n) a_{s_2 s_3}(G_n) a_{s_3 s_1}(G_n) \bm 1\{ X_{=\bm s} \},$$
where:  
\begin{itemize}
\item[--] $ V(G_n)_{3}$ is the set of all $3$-tuples ${\bm s}=(s_1, s_2, s_3)\in  V(G_n)^{3}$ with distinct indices.\footnote{For a set $S$, the set $S^N$ denotes the $N$-fold cartesian product $S\times S \times \cdots \times S$.} Thus, the cardinality of $V(G_n)_{3}$ is $\frac{|V(G_n)|!}{(|V(G_n)|-3)!}$. 
\item[--] For any ${\bm s}=(s_1, s_2, s_3) \in  V(G_n)_{3}$, $\bm 1\{X_{=\bm s}\}:= \bm 1\{X_{s_1}=X_{s_2}=X_{s_{3}}\}$. 
\end{itemize}

Next, for $v \in V(G_n)$ and $1 \leq a \leq c$, let $Z_{v}(a)=\bm 1\{X_{v}=a\}-\frac{1}{c}$. Then, for $\bm s =(s_1, s_2, s_3) \in V(G_n)_{3}$,  it is easy to check that 
\begin{align}\label{eq:product_sum}
\bm  1\{X_{=\bm s}\}-\frac{1}{c^2} &= \sum_{a=1}^c \left\{ \bm  1 \{X_{s_1}=X_{s_2}=X_{s_3}=a\} -\frac{1}{c^3} \right\}   \nonumber \\
&=\sum_{a=1}^c \left\{ \frac{Z_{s_1}(a) Z_{s_2}(a) + Z_{s_2}(a) Z_{s_3}(a)+  Z_{s_3}(a) Z_{s_1}(a)}{c}  +  Z_{s_1}(a) Z_{s_2}(a) Z_{s_3}(a) \right\},
\end{align}
using $\sum_{a=1}^c Z_v(a)=0$, for all $v \in V(G_n)$. Next, define $M(\bm s, G_n)= \frac{a_{s_1 s_2}(G_n) a_{s_2 s_3}(G_n) a_{s_3 s_1}(G_n)}{6}$ and let
\begin{align}\label{eq:T1}
T_1(K_3, G_n) & = \frac{1}{c}\sum_{a=1}^c \sum_{\bm s \in V(G_n)_{3} } M(\bm s, G_n) \left\{Z_{s_1}(a) Z_{s_2}(a) + Z_{s_2}(a) Z_{s_3}(a)+  Z_{s_3}(a) Z_{s_1}(a) \right\} \nonumber \\ 
& =  \frac{1}{2c}\sum_{a=1}^c \sum_{1 \leq  u \ne v \leq |V(G_n)|} d_{uv} Z_{u}(a) Z_{v}(a), 
\end{align}
where $d_{u v}$ is the number of triangles in $G_n$ with $(u, v)$ as an edge, and 
\begin{align}\label{eq:T2}
T_2(K_3, G_n)= \sum_{a=1}^c \sum_{\bm s \in V(G_n)_{3} } M(\bm s, G_n) Z_{s_1}(a) Z_{s_2}(a) Z_{s_3}(a).
\end{align}
Note that 
$$Z_3(G_n)=\frac{T_1(K_3, G_n) + T_2(K_3, G_n)}{\sqrt{\Var(T_3(G_n))}}.$$

%

With these definitions, we will show that the moments of $Z_3(G_n)$ are bounded. To this end, for a hypergraph $F=(V(F), E(F))$, the degree of a vertex $x \in V(F)$, to be denoted by $d_F(x)$, is the number of hyperedges passing through on $x$. Let $d_{\min}(F)$ denote the minimum degree of the hypergraph.

\begin{lemma}\label{lm:Z_moment} For any integer $r \geq 1$, $\E(|Z_3(G_n)|^r) < C(c, r)$, where $C(c, r)$ is a constant depending only $c$ and $r$. 
\end{lemma}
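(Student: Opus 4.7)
The plan is to reduce to even moments and bound them by expansion combined with the Alon-type fractional-exponent estimate cited in the statement. For odd $r$, Jensen's inequality gives $\E(|Z_3(G_n)|^r)\leq \E(Z_3(G_n)^{2k})^{r/(2k)}$ for any $2k\geq r$, so it is enough to prove $\E(Z_3(G_n)^{2k})\leq C(c,k)$ for every integer $k\geq 1$. Using the decomposition $Z_3(G_n)=\sigma^{-1}(T_1(K_3,G_n)+T_2(K_3,G_n))$ that follows from \eqref{eq:product_sum}--\eqref{eq:T2}, I would expand
$$
\sigma^{2k}\,\E(Z_3(G_n)^{2k})=\sum_{j=0}^{2k}\binom{2k}{j}\E\bigl(T_1(K_3,G_n)^j\,T_2(K_3,G_n)^{2k-j}\bigr).
$$
Each cross-moment is then a multi-sum over sequences of $j$ weighted edges $(u,v)$ of $G_n$ (with weight $d_{uv}$, contributed by $T_1$) and $2k-j$ triangles $(s_1,s_2,s_3)$ of $G_n$ (contributed by $T_2$), paired with $2k$ color labels $a_1,\ldots,a_{2k}\in\{1,\ldots,c\}$.

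After taking expectations, independence of the vertex colorings and the centering $\E(Z_v(a))=0$ force every vertex appearing in the resulting $Z$-product to occur with multiplicity at least two; otherwise the term vanishes. Thus the sum reduces to one indexed by multi-hypergraphs $F$ whose $2k$ hyperedges consist of $j$ two-element edges of $G_n$ (each weighted by $d_{uv}$) and $2k-j$ three-element triangles of $G_n$, subject to $d_{\min}(F)\geq 2$. Since $|Z_v(a)|\leq 1$ and the color sum has at most $c^{2k}$ terms, the corresponding expectation is bounded above in absolute value by a constant $C(c,k)$ times the weighted count of copies of $F$ in $G_n$.

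The final step bounds this weighted $F$-count by appealing to Corollary~\ref{cor:alon_exponent} in Appendix~\ref{sec:moment_hypergraph}. Because each $d_{uv}$ equals the number of triangles of $G_n$ through the edge $(u,v)$, every two-element hyperedge of $F$ can be absorbed into a triangle of $G_n$ with a ``free'' (marked) vertex; the resulting hypergraph $\widetilde{F}$ has $2k$ hyperedges each isomorphic to a triangle of $G_n$, and the constraint $d_{\min}(F)\geq 2$ yields a structural condition on $\widetilde{F}$ forcing its fractional stable number to satisfy $\alpha^*(\widetilde{F})\leq k$ (for instance by LP duality: assigning weight $1/2$ to every non-free vertex is a fractional cover, hence $\alpha^*(\widetilde{F})\leq|E(\widetilde{F})|/2=k$). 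Corollary~\ref{cor:alon_exponent} then gives that the weighted count of $F$ in $G_n$ is at most $C(c,k)\cdot N(K_3,G_n)^k$. Since $\sigma^2\gtrsim_c N(K_3,G_n)$ by \eqref{eq:variance_T3}, this yields $\E(Z_3(G_n)^{2k})\lesssim_{c,k}\sigma^{-2k}N(K_3,G_n)^k\lesssim_c 1$, as required.

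The main obstacle is the combinatorial bookkeeping in this fractional-exponent step: one must enumerate the distinct hypergraphs $F$ that arise in the expansion, verify in every case that the min-degree-$\geq 2$ condition together with the triangle-promotion of the $T_1$-edges yields $\alpha^*(\widetilde{F})\leq k$, and confirm that Corollary~\ref{cor:alon_exponent} applies in the weighted setting where the $d_{uv}$ factors appear. Once these ingredients are in place, the claimed uniform moment bound follows directly from the chain of inequalities above, and the convergence $\E(Z_3(G_n)^4)\to 3$ then upgrades to convergence of all moments by uniform integrability (which is the use this lemma will be put to in Theorem~\ref{THM:4MOMENT}).
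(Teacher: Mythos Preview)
Your overall architecture (reduce to even moments, expand, kill terms with a degree-one vertex, invoke the Friedgut--Kahn/Alon fractional-cover bound) matches the paper. The gap is in your ``triangle promotion'' step. When you replace a $T_1$-edge $(u,v)$ by a triangle $(u,v,w)$ with a free apex $w$, that new vertex has degree $1$ in $\widetilde F$; the constraint $d_{\min}(F)\ge 2$ applied only to the \emph{original} vertices. Hence your claimed fractional cover ``weight $1/2$ on every non-free vertex'' does not cover the free apices, and in fact $\gamma(\widetilde F)$ can exceed $k$. Concretely, take $k=1$ and $j=2$: then $F$ is a double edge on $\{u,v\}$, the relevant sum is $\sum_{u\ne v} d_{uv}^2$, and $\widetilde F$ is the $3$-uniform $2$-pyramid $\triangle_2$ with $\gamma(\triangle_2)=2>1$ (assign weight $1$ to each apex). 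Correspondingly, the count of $\widetilde F$ in the triangle-hypergraph is $\asymp N(\triangle_2,G_n)$, which is \emph{not} $O(N(K_3,G_n))$ in general (e.g.\ on the $n$-pyramid $\triangle_n$ one has $N(K_3)=n$ but $N(\triangle_2)\asymp n^2$). So your final inequality $\E(Z_3^{2k})\lesssim \sigma^{-2k}N(K_3,G_n)^k$ is false for the $T_1$ contributions.

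The paper avoids this by \emph{not} promoting: it bounds $\E(T_1^r)$ and $\E(T_2^r)$ separately (the cross terms are handled by the elementary convexity $|a+b|^r\le 2^{r-1}(|a|^r+|b|^r)$ for even $r$). For $T_2$ the argument is exactly yours and gives $\lesssim N(K_3,G_n)^{r/2}$. For $T_1$, one applies the \emph{weighted} form of Corollary~\ref{cor:alon_exponent} directly to the multigraph $F$ (with edge weights $d_{uv}$), obtaining
\[
\sum_{\bm s}\prod_{(u,v)\in E(F)} d_{s_u s_v}\ \lesssim_r\ \Bigl(\sum_{u\ne v} d_{uv}^{2}\Bigr)^{|E(F)|/2}\ \le\ \Bigl(\sum_{u\ne v} d_{uv}^{2}\Bigr)^{r/2}.
\]
The crucial point you are missing is that this is controlled not by $N(K_3,G_n)$ but by the \emph{full} variance: from \eqref{eq:variance_triangle} one has $\sigma^2\asymp N(K_3,G_n)+N(\triangle_2,G_n)\asymp \sum_{u<v}d_{uv}^2$, so $(\sum d_{uv}^2)^{r/2}\lesssim_c \sigma^r$. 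If you replace your promotion step by this direct weighted estimate and use $\sigma^2\gtrsim_c\sum d_{uv}^2$ rather than merely $\sigma^2\gtrsim_c N(K_3,G_n)$, your argument goes through.
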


The result in Theorem \ref{THM:4MOMENT} is an immediate consequence of this lemma: In particular, if $Z_3(G_n)  \stackrel{D} \rightarrow N(0, 1)$, then, by the above lemma and uniform integrability, $\E(Z_3(G_n)^4) \rightarrow 3$. \\

\noindent{\it Proof of Lemma} \ref{lm:Z_moment}: Note that it suffices to prove the result for $r$ even. Moreover, recalling \eqref{eq:T1} and \eqref{eq:T2}, and by the binomial expansion, it suffices to show the following: For all $r \geq 2$ even, 
\begin{align}\label{eq:moment_Z3_I}
\frac{\E(T_1(K_3, G_n)^r)}{\Var(T_3(G_n))^{\frac{r}{2}}} \lesssim_{c, r} 1 \quad \text{and} \quad \frac{\E(T_2(K_3, G_n)^r)}{\Var(T_3(G_n))^{\frac{r}{2}}} \lesssim_{c, r} 1. 
\end{align}

We begin with $T_2(K_3, G_n)$. For $\bm s=(s_1, s_2, s_3) \in V(G_n)_3$, denote by $Z_{\bm s}(a)=Z_{s_1}(a) Z_{s_2}(a)  Z_{s_3}(a)$. Then a direct expansion gives,  
\begin{align*} 
\frac{\E(T_2(K_3, G_n)^r)}{\Var(T_3(G_n))^{\frac{r}{2}}} = &\frac{1}{\Var(T_3(G_n))^{\frac{r}{2}}} \sum_{a_1,\cdots, a_r \in [c]} \sum_{\bm s_1, \bm s_2, \ldots, \bm s_r  \in V(G_n)_3}  \prod_{j=1}^r M(\bm s_j, G_n)  \E \left( \prod_{j=1}^r Z_{\bm s_j}(a_j) \right).
\end{align*}
Let $F$ be the 3-uniform multi-hypergraph formed by the union of the hyperedges $\bm s_1, \bm s_2, \ldots, \bm s_r$. Note that 
$$ \E \left( \prod_{j=1}^r Z_{\bm s_j}(a_j) \right) = 0, $$ 
whenever there exists a $v \in V(F)$ with $d_F(v)=1$. This implies, 
\begin{align}\label{eq:T2moment} 
 \frac{\E(T_2(K_3, G_n)^r)}{\Var(T_3(G_n))^{\frac{r}{2}}}  \lesssim_{c, r} \frac{1}{\Var(T_3(G_n))^{\frac{r}{2}}}  \sum_{F \in \cH_r: d_{\mathrm{min}}(F) \geq 2}   \sum_{\bm s \in V(G_n)_{V(F)}} \prod_{(u, v, w) \in E(F)} a_{s_u s_v s_w}(G_n) ,
 \end{align}
where $\cH_r$ is the collection all 3-uniform multi-hypergraphs with at most $r$ hyperedges and no isolated vertex, and $a_{s_u s_v s_w}(G_n) = a_{s_u s_v }(G_n)  a_{s_v s_w}(G_n) a_{s_w s_u}$. Now, let $H_{G_n}(K_3)$ be the 3-uniform hypergraph with vertex set $V(G_n)$ and a hyperedge $\bm s=(s_1, s_2, s_3)$ whenever $M(\bm s, G_n) \ne 0$, that is, there is an hyperedge between 3 vertices of  $G_n$ whenever there is a triangle passing through the vertices. 
Note that the number of hyperedges  $|E(H_{G_n}(K_3))| = N (K_3, G_n)$, which implies, by Remark \ref{rem:alon_exponent} and Lemma \ref{lm:degree_one}, 
$$\sum_{\bm s \in V(G_n)_{V(F)}} \prod_{(u, v, w) \in E(F)} a_{s_u s_v s_w}(G_n) \leq |E(H_{G_n}(K_3))|^\frac{|E(F)|}{2} \lesssim_r N (K_3, G_n)^\frac{r}{2},$$
for all $F \in \cH_r$ such that  $d_{\mathrm{min}}(F) \geq 2$. Now, the recall the definition of $d_{uv}$ from Theorem \ref{T1}. Clearly, $N(K_3, G_n)=\frac{1}{3} \sum_{1 \leq u < v \leq |V(G_n)|} d_{uv}$ and $N(\triangle_2, G_n)=\sum_{1 \leq u < v \leq |V(G_n)|} {d_{uv} \choose 2}$. This shows, recalling \eqref{eq:variance_T3},  
\begin{align}\label{eq:K3D2}
\Var(T_3(G_n)) & \asymp N(K_3, G_n) + N(\triangle_2, G_n) \nonumber \\ 
& \asymp  \sum_{1 \leq u < v \leq |V(G_n)|} d_{uv} +  \sum_{1 \leq u < v \leq |V(G_n)|} d_{uv}^2(G_n) \bm 1\{d_{uv} \geq 2\}.
\end{align} 
This implies that $\Var(T_3(G_n) \geq \sum_{1 \leq u < v \leq |V(G_n)|} d_{uv} \gtrsim N (K_3, G_n)$. Then, because the sum in \eqref{eq:T2moment} is over a finite set (not depending on $n$),
it follows that 
\begin{align}\label{eq:T2bound}
\frac{\E(T_2(K_3, G_n)^r)}{\Var(T_3(G_n))^{\frac{r}{2}}}  \lesssim_r 1.
\end{align}

Next, consider $T_1(K_3, G_n)$. Then, 
\begin{align*} 
& \frac{\E(T_1(K_3, G_n)^r)}{\Var(T_3(G_n))^{\frac{r}{2}}}  \nonumber \\
&=\frac{1}{2^r c^r \Var(T_3(G_n))^{\frac{r}{2}}} \sum_{a_1,\cdots, a_r \in [c]} \sum_{\substack{1 \leq i_1 \ne j_1 \leq |V(G_n)| \\ \vdots \\ 1 \leq i_r \ne j_r  \leq |V(G_n)|}}  \prod_{s=1}^r d_{i_s j_s}(G_n)  \E  \left(  \prod_{s=1}^r  Z_{i_s}(a_s) Z_{j_s}(a_s) \right).
\end{align*} 
Let $F$ be the unweighted multi-graph formed by the union of the edges $(i_1, j_1), (i_2, j_2), \ldots, (i_r, j_r)$. Note that 
$$  \E  \left(  \prod_{s=1}^r  Z_{i_s}(a_s) Z_{j_s}(a_s) \right) = 0, $$ 
whenever there exists a $v \in V(F)$ with $d_F(v)=1$. This implies, 
\begin{align}\label{eq:T1moment} 
 \frac{\E(T_1(K_3, G_n)^r)}{\Var(T_3(G_n))^{\frac{r}{2}}}   \lesssim_{c, r} \frac{1}{\Var(T_3(G_n))^{\frac{r}{2}}}  \sum_{F \in \cG_r: d_{\mathrm{min}}(F) \geq 2} \sum_{\bm s \in V(G_n)_{V(F)}} \prod_{(u, v) \in E(F)} d_{s_u s_v}(G_n),
\end{align}
where $\cG_r$ is the collection all multi-graphs with at most $r$ edges and no isolated vertex. 
Then by Corollary \ref{cor:alon_exponent}, for any $F \in \cG_r$ such that $d_{\mathrm{min}}(F) \geq 2$, 
$$\sum_{\bm s \in V(G_n)_{V(F)}} \prod_{(u, v) \in E(F)} d_{s_u s_v}(G_n) \lesssim_r \left(\sum_{1 \leq u \ne v \leq |V(G_n)|} d_{u v}(G_n)^2 \right)^{\frac{r}{2}}.$$ 
Then using the fact that $\Var(T_3(G_n) \gtrsim \sum_{1 \leq u < v \leq |V(G_n)|} d_{uv}^2$ (recall \eqref{eq:K3D2}) and the sum in \eqref{eq:T1moment} is over a finite set (not depending on $n$),
it follows that 
\begin{align}\label{eq:T1bound}
\frac{\E(T_1(K_3, G_n)^r)}{\Var(T_3(G_n))^{\frac{r}{2}}}  \lesssim_r 1.
\end{align}
Combining \eqref{eq:T2bound} and \eqref{eq:T1bound} the result in \eqref{eq:moment_Z3_I} follows. 

\section{Discussions and Future Directions}
\label{sec6} 

In this paper we obtain a quantitative CLT for $Z_3(G_n)$, the (standardized) number of monochromatic triangles in a uniformly random $c$-coloring of a graph sequence $G_n$. The resulting error term can be bounded in the terms of the fourth moment difference $\E(Z_3(G_n)^4)-3$ whenever $c \geq 5$.  The next natural step is to consider monochromatic $r$-cliques, for $r \geq 4$ or general monochromatic subgraphs $H$. Even though a CLT for general monochromatic subgraphs is known in a few special cases (for example, when $G_n=K_n$ is itself the complete graph, or a converging sequence of dense graphs \cite{BM_coloring_dense}), the precise conditions for it to have a Gaussian limit is yet to be understood. Given the results above, it is reasonable to expect that there will exist a critical value $c_0(H) \geq 2$ (depending on $H$) such that a fourth moment theorem for the number of monochromatic copies of $H$ in $G_n$ will hold for $c \geq c_0(H)$. While, in principle, the current approach based on the Hoeffding's decomposition and quantitative martingale CLT has the promise to generalize, the analysis of the resulting error terms become intractable as one moves from triangles to higher subgraphs. This demands a more systematic approach for understanding the various graphs that arise in the fourth moment of the corresponding statistics. It also remains to investigate the fourth-moment phenomenon in the regime where $c=c_n \rightarrow \infty$ such that $\E(Z_3(G_n))=\frac{N(K_3, G_n)}{c^2} \rightarrow \infty$. While it is possible to make the dependence on $c$ in the upper bound in  \eq{12} explicit by following the proof of Theorem \ref{T1}, 
to relate the resulting error term to the fourth-moment difference in the case $c=c_n \rightarrow \infty$,  a strengthening of Lemma~\ref{l1} (where the subgraph counts are appropriately scaled by the leading order of their corresponding moments) will be required. 

Another interesting direction is to explore whether the error bounds obtained in Theorem \ref{T1} and Theorem \ref{T2} can be improved, using other quantitative CLT methods. In this regard, in forthcoming work Omar El Dakkak, Ivan Nourdin, and Giovanni Peccati have used Stein's method to obtain a better error bound for the CLT of the number of monochromatic edges  for $c\geq 2$ fixed (personal communication). \\

\small{
\noindent\textbf{Acknowledgements}: BBB thanks Somabha Mukherjee for useful discussions during the preliminary stages of the project. XF thanks Giovanni Peccati for many helpful discussions during the preparation of the paper and Omar El Dakkak for checking the fourth moment computation in Lemma~\ref{l2}. The paper was initiated from a discussion of the first two authors during a workshop at the American Institute of Mathematics. We acknowledge the financial support of the Institute and thank the staff for their hospitality. XF was partially supported by Hong Kong RGC ECS 24301617 and GRF 14302418, and 14304917, a CUHK direct grant, and a CUHK start-up grant.}


\small

\normalsize 

\appendix

\section{Estimates from Extremal Combinatorics}
\label{sec:moment_hypergraph}

In this section we collect some estimates from extremal combinatorics, which might be useful in estimating graph counts. 

\begin{definition} Given a $s$-uniform multi-hypergraph $F=(V(F), E(F))$, the {\it fractional stable number} of $F$, to be denoted by $\gamma(F)$, is defined as: 
\begin{equation}
\gamma(F)= \arg\max_{\substack{\phi: V(F) \rightarrow [0,1], \\ \sum_{x \in e}\phi(x) \le 1 \text{ for every }  e \in E(F)}} \sum_{v\in V(F)} \phi(v). 
\label{eq:gamma}
\end{equation}
It is clear  that $\gamma(F)=\gamma(F_\mathrm{sim})$, where $F_{\mathrm{sim}}$ is the simple hypergraph obtained from $F$ by replacing the hyperedges between the vertices which occur more than once, by a single hyperedge.  
\end{definition}

Recall that $d_{\min}(F)$ denotes the minimum degree of a hypergraph $F$. The next lemma is a generalization of \cite[Lemma 4.1]{BDM} for hypergraphs. 

\begin{lemma}
Let $F=(V(F), E(F))$ be a multi-hypergraph with no isolated vertex and $d_{\min}(F)\geq 2$. Then $\gamma(F)\leq \frac{1}{2}|E(F)|$. 
\label{lm:degree_one}
\end{lemma}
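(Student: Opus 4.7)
The plan is to prove this by a straightforward double-counting argument, essentially summing the LP constraints that define $\gamma(F)$ over all edges and then using the minimum-degree hypothesis. Let $\phi:V(F)\to[0,1]$ be any feasible assignment for the optimization in \eqref{eq:gamma}, i.e.\ $\sum_{x\in e}\phi(x)\le 1$ for every $e\in E(F)$. I would sum this constraint over all hyperedges $e\in E(F)$ (with multiplicity, since $F$ is a multi-hypergraph), swap the order of summation, and recognize that each vertex $v$ appears in exactly $d_F(v)$ hyperedges, giving
\[
\sum_{v\in V(F)} d_F(v)\,\phi(v) \;=\; \sum_{e\in E(F)}\sum_{x\in e}\phi(x) \;\le\; |E(F)|.
\]

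Next, I would invoke the hypothesis $d_{\min}(F)\ge 2$: since $\phi(v)\ge 0$ for all $v$, this immediately yields
\[
2\sum_{v\in V(F)} \phi(v) \;\le\; \sum_{v\in V(F)} d_F(v)\,\phi(v) \;\le\; |E(F)|,
\]
so $\sum_v \phi(v)\le \tfrac12|E(F)|$. Taking the supremum over feasible $\phi$ gives $\gamma(F)\le \tfrac12|E(F)|$, as desired. The ``no isolated vertex'' assumption is automatic once $d_{\min}(F)\ge 2$, but it guarantees that the sum $\sum_v d_F(v)\phi(v)$ genuinely ranges over the whole vertex set (any isolated vertex could otherwise be assigned $\phi=1$ without affecting any constraint).

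There is essentially no obstacle here: the argument is a single application of the handshake identity for hypergraphs combined with the degree lower bound, and it works identically for simple and for multi-hypergraphs as long as one counts $d_F(v)$ with the same multiplicity convention as $|E(F)|$. The only small care needed is to note that replacing $F$ by its underlying simple hypergraph $F_{\mathrm{sim}}$ can only decrease degrees and the number of edges by the same proportional factor, so the passage between the two versions in the statement is harmless; alternatively, one may simply work directly with $F$ as above, which is what I would do in the write-up.
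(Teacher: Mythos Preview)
Your argument is correct and essentially identical to the paper's own proof: both sum the LP constraints over all hyperedges, rewrite the double sum as $\sum_{v} d_F(v)\phi(v)$, and then use $d_F(v)\ge d_{\min}(F)\ge 2$ together with $\phi\ge 0$ to conclude. The only cosmetic difference is that the paper starts from an optimal $\varphi$ rather than an arbitrary feasible one, but this changes nothing.
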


\begin{proof}  Let $\varphi:V(F)\rightarrow [0, 1]$ be an optimal solution of the linear program (\ref{eq:gamma}). If $d_{\min}(F)\geq 2$, we have
$$ \sum_{x\in V(F)} \varphi(x) \leq \frac{1}{d_{\min}(F)} \sum_{x\in V(F)} d_F(x) \varphi(x)\leq \frac{1}{d_{\min}(F)} \sum_{e \in E(F)} \sum_{x \in e} \varphi(x) \leq\frac{1}{2}|E(F)|.$$
which gives $\gamma(F)\leq \frac{1}{2}|E(F)|$.
\end{proof}

A weighted $r$-uniform hypergraph $H=(V(H), w)$ is a function $w: V(H)_r \rightarrow \R_{\geq 0}$, which is symmetric under the permutation of the coordinates. If the function $w$ takes values in $\{0, 1\}$, then the collection of $r$-tuples where $w=1$ is the collection of hyperedges $E(H)$, which gives the usual unweighted hypergraph. The following lemma gives a bound on the number of weighted copies of a hypergraph $F$ in a weighted hypergraph $H$. 

\begin{theorem}{\em(\cite[Lemma~3.3]{hypergraphs})} For a fixed (unweighted) multi-hypergraph $F=(V(F), E(F))$ and a weighted $r$-uniform hypergraph $H=(V(H), w)$, there exists a positive constant  $C=C(F)$, such that \begin{align}
\sum_{\bm s \in V(H)_{V(F)}} & \prod_{(u_1, u_2, \ldots, u_r) \in E(F)} w( s_{u_1}, \bm s_{u_2}, \ldots,  s_{u_r}) \nonumber \\ 
& \leq C  \prod_{ (u_1, u_2, \ldots, u_r) \in E(F)} \left(\sum_{\bm e \in V(H)_r} w(\bm e)^{\frac{1}{\phi(u_1, u_2, \ldots, u_r)}} \right)^{\phi(u_1, u_2, \ldots, u_r)},
\label{eq:alon_exponent}
\end{align}
for any function $\phi: E(F) \rightarrow [0, 1]$ such that 
$$\sum_{\bm u \in E(F): \bm u \text{ containing } v} \phi(\bm u) \geq 1, \quad \text{ for all } v \in V(F).$$  
\label{th:alon_exponent}
\end{theorem}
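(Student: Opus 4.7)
The plan is to deduce \eqref{eq:alon_exponent} from the generalized H\"older inequality for hypergraphs (Finner's inequality), which in the fractional-cover regime is the functional analogue of Shearer's entropy lemma.

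First I would rewrite the left-hand side of \eqref{eq:alon_exponent} using the standard exponent substitution. Set $g_{\bm u}(\bm s|_{\bm u}) := w(s_{u_1},\ldots,s_{u_r})^{1/\phi(\bm u)}$ for each hyperedge $\bm u = (u_1,\ldots,u_r) \in E(F)$, so that the original factor equals $g_{\bm u}(\bm s|_{\bm u})^{\phi(\bm u)}$. After this substitution the target inequality is equivalent to
\[ \sum_{\bm s \in V(H)^{V(F)}} \prod_{\bm u \in E(F)} g_{\bm u}(\bm s|_{\bm u})^{\phi(\bm u)} \;\leq\; C \prod_{\bm u \in E(F)} \Bigl(\sum_{\bm e \in V(H)^{r}} g_{\bm u}(\bm e)\Bigr)^{\phi(\bm u)}, \]
since $\sum_{\bm e} g_{\bm u}(\bm e) = \sum_{\bm e} w(\bm e)^{1/\phi(\bm u)}$ recovers \eqref{eq:alon_exponent} upon inverting the substitution. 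The difference between $V(H)_r$ (ordered distinct-index tuples) and $V(H)^r$ amounts to lower-order diagonal terms which can be absorbed into $C = C(F)$.

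Second I would establish the displayed inequality by induction on $|V(F)|$, peeling off coordinates one at a time. Pick $v \in V(F)$, partition $E(F) = E_v \cup E_v^c$ with $E_v := \{\bm u \in E(F): v \in \bm u\}$, hold $\bm s_{-v}$ fixed, and set $\tau_v := \sum_{\bm u \in E_v} \phi(\bm u) \geq 1$ by the fractional cover assumption. Since the exponents $\phi(\bm u)/\tau_v$ for $\bm u \in E_v$ sum to $1$, the weighted H\"older inequality applied to the inner sum over $s_v$ yields
\[ \sum_{s_v} \prod_{\bm u \in E_v} g_{\bm u}(\bm s|_{\bm u})^{\phi(\bm u)/\tau_v} \;\leq\; \prod_{\bm u \in E_v} \Bigl(\sum_{s_v} g_{\bm u}(\bm s|_{\bm u})\Bigr)^{\phi(\bm u)/\tau_v}. \]
Combining this with the elementary bound $\sum_x a(x)^{\tau_v} \leq (\sum_x a(x))^{\tau_v}$, valid for $a \geq 0$ and $\tau_v \geq 1$, applied to $a(s_v) := \prod_{\bm u \in E_v} g_{\bm u}^{\phi(\bm u)/\tau_v}$, we deduce the inner bound $\sum_{s_v} \prod_{\bm u \in E_v} g_{\bm u}^{\phi(\bm u)} \leq \prod_{\bm u \in E_v} (\sum_{s_v} g_{\bm u})^{\phi(\bm u)}$. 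Each resulting factor depends only on $\bm s|_{\bm u \setminus \{v\}}$, so what remains is a sum over $V(H)^{V(F) \setminus \{v\}}$ of a product indexed by the reduced hypergraph $F \setminus v$, for which the restriction of $\phi$ continues to satisfy the fractional cover condition; the inductive hypothesis then closes the argument.

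The main technical obstacle is keeping track of the combinatorial constant $C = C(F)$. Three sources contribute: (i)~reconciling the distinct-index summation domain $V(H)_{V(F)}$ with the unrestricted domain $V(H)^{V(F)}$ used by Finner's inequality, which requires adding and subtracting diagonal terms at a cost of at most $|V(F)|!$; (ii)~treating a genuine multi-hypergraph $F$ by collapsing to its underlying simple hypergraph $F_{\mathrm{sim}}$ and tracking edge multiplicities; and (iii)~any slack in the fractional cover condition (i.e., $\sum_{\bm u \ni v} \phi(\bm u) > 1$ strictly), which is absorbed by the step $\sum_x a^{\tau_v} \leq (\sum_x a)^{\tau_v}$ inside the inductive H\"older bound. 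None of these alters the shape of \eqref{eq:alon_exponent}; they only contribute to the $F$-dependent prefactor, yielding the claimed existence of $C = C(F)$.
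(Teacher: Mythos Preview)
The paper does not supply its own proof of this theorem: it is quoted verbatim as \cite[Lemma~3.3]{hypergraphs} (Friedgut, 2004) and used as a black box, so there is no in-paper argument to compare against. Your proposal is correct and is in fact the standard proof --- it is essentially Finner's inequality (the functional form of Shearer's lemma), which is precisely how Friedgut establishes the cited lemma. The inductive H\"older step you describe, peeling off one vertex at a time and using $\tau_v\geq 1$ to pass from $\sum a^{\tau_v}$ to $(\sum a)^{\tau_v}$, is the heart of that argument and goes through cleanly.

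Two minor clarifications. First, your remark that the discrepancy between $V(H)_{V(F)}$ and $V(H)^{V(F)}$ is ``absorbed into $C$'' slightly understates how easy this is: since $w\colon V(H)_r\to\mathbb{R}_{\geq 0}$ is only defined on distinct tuples, extending it by zero on the diagonal makes the right-hand side unchanged, while the left-hand side can only increase when the injectivity constraint on $\bm s$ is dropped; so no constant is consumed here and Finner's inequality actually yields $C=1$. Second, your phrase ``reduced hypergraph $F\setminus v$'' should be read in the general (non-uniform) sense, since edges through $v$ shrink to $(r-1)$-element sets after the first peel; the induction is really over arbitrary finite products of nonnegative functions indexed by a fractional edge cover, not just $r$-uniform ones. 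With those caveats the argument is complete.
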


\begin{remark}\label{rem:alon_exponent} The result above was proved for unweighted graphs by \cite{alon81} and for unweighted hypergraphs by \cite{hypergraphcopies}. In this case, $w(\bm e)^{\frac{1}{\phi(u_1, u_2, \ldots, u_r)}}=w(\bm e)$, and $ \sum_{\bm e \in V(H)_s} w(\bm e) =|E(H)| $, the number of hyperedges in $H$. Therefore, the bound above becomes $N(F, H) \leq C |E(H)|^{\alpha(F)} = C |E(H)|^{\gamma(F)}$, 
where 
$$\alpha(F):= \min_{\phi: E(F) \rightarrow [0, 1]} \sum_{\bm u \in E(F)} \phi (\bm u) \quad \text{ such that }\sum_{\bm u \text{ containing } v} \phi(\bm u) \geq 1, \quad \text{ for all } v \in V(F).$$
The quantity $\alpha(F)$ is called the fractional edge-cover number of $F$, and $\gamma(F)=\alpha(F)$ by the linear programming duality. 
\end{remark}

If the hypergraph $F$ has minimum degree 2, then we can choose $\phi(\bm u)=\frac{1}{2}$, for all $\bm u \in E(F)$ in the previous result. This gives the following bound. 

\begin{corollary} For a fixed (unweighted) hypergraph $F=(V(F), E(F))$ with $d_{\mathrm{min}}(F) \geq 2$, and a weighted $r$-uniform hypergraph $H=(V(H), w)$, there exists a positive constant  $C=C(F)$, such that 
\begin{align}
\sum_{\bm s \in V(H)_{V(F)}} & \prod_{ (u_1, u_2, \ldots, u_r) \in E(F)} w( s_{u_1},  s_{u_2}, \ldots,  s_{u_r}) \leq C   \left(\sum_{\bm e \in V(H)_r} w(\bm e)^2 \right)^{\frac{|E(F)|}{2}}.
\label{eq:alon_exponent_F}
\end{align}
\label{cor:alon_exponent}
\end{corollary}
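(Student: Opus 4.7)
The plan is to derive this as an immediate consequence of Theorem~\ref{th:alon_exponent} by making a single judicious choice of the weight function $\phi$. Specifically, I would set $\phi(\bm u) = \tfrac{1}{2}$ for every $\bm u \in E(F)$. First I would verify this is an admissible choice. Since $\tfrac{1}{2} \in [0,1]$, we have $\phi : E(F) \to [0,1]$. Moreover, for any $v \in V(F)$,
\begin{align*}
\sum_{\bm u \in E(F): \bm u \ni v} \phi(\bm u) = \tfrac{1}{2}\, d_F(v) \geq \tfrac{1}{2} \, d_{\min}(F) \geq 1,
\end{align*}
where the last inequality uses exactly the hypothesis $d_{\min}(F) \geq 2$. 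So the covering constraint in Theorem~\ref{th:alon_exponent} is satisfied.

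Next I would substitute $\phi \equiv \tfrac{1}{2}$ into inequality \eqref{eq:alon_exponent}. Each factor on the right-hand side becomes
\begin{align*}
\left( \sum_{\bm e \in V(H)_r} w(\bm e)^{1/\phi(\bm u)} \right)^{\phi(\bm u)} = \left( \sum_{\bm e \in V(H)_r} w(\bm e)^{2} \right)^{1/2},
\end{align*}
and since the product is taken over all $|E(F)|$ hyperedges of $F$, this yields
\begin{align*}
\sum_{\bm s \in V(H)_{V(F)}} \prod_{(u_1, \ldots, u_r) \in E(F)} w(s_{u_1}, \ldots, s_{u_r}) \leq C \left( \sum_{\bm e \in V(H)_r} w(\bm e)^2 \right)^{|E(F)|/2},
\end{align*}
which is exactly \eqref{eq:alon_exponent_F}. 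There is no real obstacle here — the entire content is already packaged in Theorem~\ref{th:alon_exponent}, and the role of the minimum-degree hypothesis is precisely to guarantee that the uniform choice $\phi \equiv \tfrac{1}{2}$ satisfies the fractional covering condition. The constant $C = C(F)$ is inherited from the corresponding constant in Theorem~\ref{th:alon_exponent}.
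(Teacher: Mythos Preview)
Your proposal is correct and matches the paper's approach exactly: the paper states just before the corollary that ``If the hypergraph $F$ has minimum degree 2, then we can choose $\phi(\bm u)=\frac{1}{2}$, for all $\bm u \in E(F)$ in the previous result,'' which is precisely your argument. There is nothing to add.
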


\section{List of Subgraphs and their Coefficients in the Fourth Moment Difference}\label{app2}

In this section we list the set of graphs which contribute to the fourth moment difference $\E(Z_3(G_n)^4)-3$ (as in Lemma \ref{l2}) in Figure \ref{fig2} and their corresponding coefficients in Table \ref{table:coefficient_list}.

\begin{figure}[h]
\centering
\begin{minipage}[l]{1.0\textwidth}
\centering
\includegraphics[width=4.15in]
    {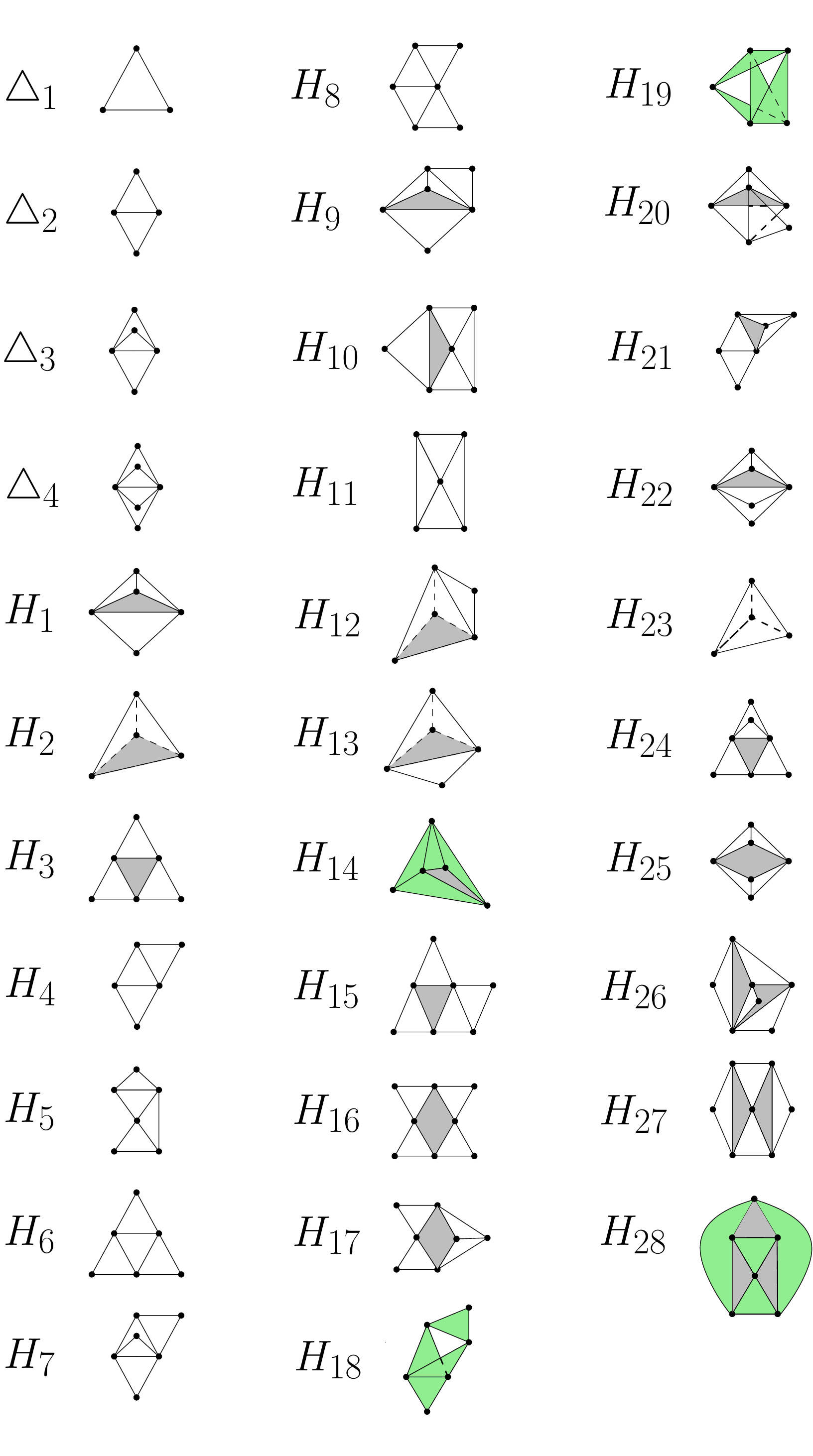}\\
\end{minipage}
\caption{\small{The different subgraphs that arise in the fourth moment condition $\E (Z_3(G_n)^4)-3$. 
Each subgraph has several specified triangles which are shown in green for $H_{14}$, $H_{18}$, $H_{19}$ and $H_{28}$ and in white for other subgraphs (1 triangle for $\triangle_1$, 2  triangles for $\triangle_2$, 3 triangles for $\triangle_3$, 4 triangles for $\triangle_4$, 3 triangles for $H_1$--$H_4$, and 4 triangles for $H_5$--$H_{28}$). Grey areas are used to contrast white. Dashed lines are used for 3-dimensional pictures. In Section~\ref{sec:th4thmomentpf_I}, $N(H, G_n)$ counts the number of copies of $H$ with the specified triangles in $G_n$. For example, a tetrahedron in $G_n$ is counted four times in $N(H_2, G_n)$ (once each to account for the missing grey face), and one time in $N(H_{23}, G_n)$. 
}}
\label{fig2}
\end{figure}

\newpage 

\begin{table} 
\small 
\centering
\begin{tabular}{|c|l|} 
\hline
Coefficient & \quad \quad Value of the Coefficient  \\
\hline
$\delta_1$ & \quad \quad  $\frac{1}{c^2}- \frac{7}{c^4}+\frac{12}{c^6}-\frac{6}{c^8}$ \\
$\delta_2$ & \quad \quad   $\frac{14}{c^3}-\frac{14}{c^4}-\frac{72}{c^5}+\frac{60}{c^6}+\frac{96}{c^7}-\frac{84}{c^8}$  \\
$\delta_3$ & \quad \quad  $36\left(\frac{1}{c^4}-\frac{3}{c^5}-\frac{2}{c^6}+\frac{10}{c^7}-\frac{6}{c^8} \right)$ \\
$\delta_4$ & \quad \quad  $24\left(\frac{1}{c^5}-\frac{7}{c^6}+\frac{12}{c^7}-\frac{6}{c^8}\right)$ \\
$h_1$ & \quad \quad  $36\left(\frac{1}{c^4}-\frac{1}{c^5}-\frac{2}{c^6}+\frac{2}{c^7}\right)$ \\
$h_2$ & \quad \quad  $36\left(\frac{1}{c^3}-\frac{5}{c^5}+\frac{10}{c^7}-\frac{6}{c^8}\right)$ \\
$h_3$ & \quad \quad  $36\left(\frac{1}{c^5}-\frac{1}{c^6}-\frac{2}{c^7}+\frac{2}{c^8}\right)$ \\
$h_4$ & \quad \quad  $12\left(\frac{3}{c^4}-\frac{6}{c^5}-\frac{5}{c^6}+\frac{16}{c^7}-\frac{8}{c^8}\right)$  \\
$h_5$ & \quad \quad  $24\left(\frac{1}{c^5}-\frac{3}{c^6}+\frac{3}{c^7}-\frac{1}{c^8}\right)$ \\
$h_6$ & \quad \quad  $24\left(\frac{1}{c^5}-\frac{3}{c^6}+\frac{2}{c^7}\right)$ \\
$h_7$ & \quad \quad  $24\left(\frac{1}{c^5}-\frac{4}{c^6}+\frac{5}{c^7}-\frac{2}{c^8}\right)$ \\
$h_{8}$ & \quad \quad  $24\left(\frac{1}{c^5}-\frac{3}{c^6}+\frac{3}{c^7}-\frac{1}{c^8}\right)$ \\
$h_{9}$ & \quad \quad  $24\left(\frac{1}{c^5}-\frac{2}{c^6}+\frac{1}{c^8}\right)$ \\
$h_{10}$ & \quad \quad  $24\left(\frac{1}{c^5}-\frac{1}{c^6}-\frac{1}{c^7}+\frac{1}{c^8}\right)$ \\
$h_{11}$ & \quad \quad  $24\left(\frac{1}{c^4}-\frac{6}{c^6}+\frac{8}{c^7}-\frac{3}{c^8}\right)$ \\
$h_{12}$ & \quad \quad  $24\left(\frac{1}{c^4}-\frac{1}{c^5}-\frac{5}{c^6}+\frac{9}{c^7}-\frac{4}{c^8}\right)$ \\
$h_{13}$ & \quad \quad  $24\left(\frac{1}{c^4}-\frac{1}{c^5}-\frac{4}{c^6}+\frac{6}{c^7}-\frac{2}{c^8}\right)$ \\
$h_{14}$ & \quad \quad  $24\left(\frac{1}{c^4}-\frac{5}{c^6}+\frac{5}{c^7}-\frac{1}{c^8}\right)$ \\
$h_{15}$ & \quad \quad  $24\left(\frac{1}{c^6}-\frac{2}{c^7}+\frac{1}{c^8}\right)$ \\
$h_{16}$ & \quad \quad  $24\left(\frac{1}{c^7}-\frac{1}{c^8}\right)$ \\
$h_{17}$ & \quad \quad  $24\left(\frac{1}{c^6}-\frac{1}{c^7}\right)$ \\
$h_{18}$ & \quad \quad  $24\left(\frac{1}{c^5}-\frac{2}{c^6}+\frac{1}{c^7}\right)$ \\
$h_{19}$ & \quad \quad  $24\left(\frac{1}{c^4}-\frac{4}{c^6}+\frac{3}{c^7}\right)$ \\
$h_{20}$ & \quad \quad  $24\left(\frac{1}{c^5}-\frac{1}{c^6}-\frac{2}{c^7}+\frac{2}{c^8}\right)$ \\
$h_{21}$ & \quad \quad  $24\left(\frac{1}{c^5}-\frac{2}{c^6}+\frac{1}{c^7}\right)$ \\
$h_{22}$ & \quad \quad  $24\left(\frac{1}{c^5}-\frac{3}{c^6}+\frac{2}{c^7}\right)$ \\
$h_{23}$ & \quad \quad  $24\left(\frac{1}{c^3}- \frac{4}{c^5}- \frac{3}{c^6}+\frac{12}{c^7}- \frac{6}{c^8}\right)$ \\
$h_{24}$ & \quad \quad  $24\left(\frac{1}{c^6}- \frac{3}{c^7}+ \frac{2}{c^8}\right)$ \\
$h_{25}$ & \quad \quad  $24\left(\frac{1}{c^5}-\frac{1}{c^6}\right)$ \\
$h_{26}$ & \quad \quad  $24\left(\frac{1}{c^6}- \frac{3}{c^7}+ \frac{2}{c^8}\right)$ \\
$h_{27}$ & \quad \quad  $24\left(\frac{1}{c^6}- \frac{2}{c^7}+\frac{1}{c^8}\right)$ \\
$h_{28}$ & \quad \quad  $24\left(\frac{1}{c^5}-\frac{4}{c^7}+ \frac{3}{c^8} \right)$\\
\hline
\end{tabular}
\caption{\small{The values of the coefficients for the different subgraphs that contribute to $\E(Z_3(G_n)^4)-3$.}} 
\label{table:coefficient_list} 
\end{table}

\normalsize

\end{document}